\documentclass[review]{elsarticle} 
\usepackage{lineno,hyperref} 
\modulolinenumbers[5] 

\journal{Journal of \LaTeX\ Templates} 









\bibliographystyle{elsarticle-num}
\usepackage[export]{adjustbox}
\usepackage{graphicx}  
\usepackage[caption=false]{subfig} 
\usepackage{graphicx}
\usepackage{amsthm}
\usepackage{amsmath} 
\usepackage{amssymb} 
\usepackage{mathtools}
\DeclarePairedDelimiter\ceil{\lceil}{\rceil}
\DeclarePairedDelimiterX{\inp}[2]{\langle}{\rangle}{#1, #2} 
\usepackage{acronym}
\usepackage[acronym]{glossaries}
\makeglossaries
\newacronym{TV}{TV}{\emph{Total Variation}}
\newacronym{FFT}{FFT}{\emph{Fast Fourier Transform}}
\newacronym{R-L}{R-L}{\emph{Reimann-Liouville}}
\newacronym{FTFC}{FTFC}{\emph{Fundamental Theorem of Fractional Calculus}}
\newacronym{PET}{PET}{\emph{Positron Emission Tomography}}
\newacronym{PDE}{PDE}{\emph{Partial Differential Equation}}

\makeglossaries
\usepackage[capitalise]{cleveref}
\newtheorem{assumption}{Assumption}
\crefname{assumption}{Assump.}{Assumption}
\newtheorem{definition}{Definition}
\crefname{definition}{Def.}{Definition}
\newtheorem{theorem}{Theorem}
\crefname{theorem}{Theorem}{Theorems}
\newtheorem{remark}{Remark}
\crefname{remark}{Remark}{Remarks}

\crefname{lemma}{Lemma}{Lemmas}
\newtheorem{proposition}{Proposition}
\crefname{proposition}{Prop.}{Propositions}
\newtheorem{corollary}{Corollary}
\crefname{corollary}{Cor.}{Corollaries}
\crefname{section}{Sec.}{Sections}
\crefname{algorithm}{Algo.}{Algorithms}
\usepackage{physics} 
\usepackage{xcolor}

 
 

\begin{document}
\begin{frontmatter}
\title{Introducing the $p$-Laplacian Spectra}
\author{Ido Cohen\footnote{Corresponding author. E-mail addresses: idoc@campus.technion.ac.il,ido.coh@gmail.com}}
\author{Guy Gilboa}
\address{Department of Electrical Engineering, Technion - Israel Institute of Technology, 3200003
Haifa, Israel}




\begin{abstract}
In this work we develop a nonlinear decomposition,  associated with nonlinear eigenfunctions of the $p$-Laplacian for $p\in(1,2)$. With this decomposition we can process signals of  different degrees of smoothness.

We first analyze solutions of scale spaces, generated by $\gamma$-homogeneous operators, $\gamma\in\mathbb{R}$. An analytic solution is formulated when the scale space is initialized with a nonlinear eigenfunction of the respective operator. We show that the flow is extinct in finite time for $\gamma\in[0,1)$.

A main innovation in this study is concerned with operators of fractional homogeneity, which require the mathematical framework of fractional calculus. The proposed transform rigorously defines the notions of decomposition, reconstruction, filtering and spectrum. The theory is applied to the $p$-Laplacian operator, where the tools developed in this framework are demonstrate\textcolor{black}{d}.
\end{abstract}

\begin{keyword}
Nonlinear spectra, filtering, shape preserving flows, p-Laplacian, nonlinear eigenfunctions.
\end{keyword}

\end{frontmatter}

\printglossary[type=\acronymtype]

\section{Introduction}
Data representation is commonly performed by transforming a signal into a different domain, more convenient for analysis and processing. By the Fourier transform, a signal can be represented as a sum of eigenfunctions of the Laplace operator \cite{fourier1822theorie}. Recently, a new data representation with respect to the $1$-Laplacian operator was suggested in \cite{gilboa2013spectral,gilboa2014total}, referred to as the \acrshort{TV}-transform. These transforms are based on operators which are particular cases of the $p$-Laplace operator,
\begin{equation}\label{eq:pLaplaceOperator}
    \Delta_p\left(u\right)=\textrm{div}\left(\abs{\nabla u}^{p-2}\nabla u\right),\quad p\in[1,\infty),
\end{equation}
where $u$ resides in some Hilbert space and $\nabla$ is the gradient operator. The $p$-Laplace operator is the negative variational derivative of the $p$-Dirichlet energy,
\begin{equation}\label{eq:pEnergy}
    J_p(u)=\frac{1}{p}\inp{\abs{\nabla u}^p}{1}.
\end{equation}
The Fourier and \acrshort{TV}-transforms stem from the gradient descent of the $p$-Dirichlet energy, $u_t(t) = \Delta_p u(t)$, with some initial condition. This flow becomes the heat equation for $p=2$ or the \acrshort{TV}-flow for $p=1$ \cite{andreu2001minimizing}. The data representation, achieved by means of these transforms, are related to the (non)-linear eigenfunctions, defined by
\begin{equation}\label{eq:pLaplaceEF}
    \Delta_p(\phi)=\lambda \cdot \phi,\quad \lambda\in\mathbb{R},
\end{equation}
with the respective value of $p$. Eq. \eqref{eq:pLaplaceEF} should be complemented with suitable boundary conditions, yielding different eigenfunctions. In this paper, we assume Neumann boundary conditions.
The existence of eigenfunctions in the form of \eqref{eq:pLaplaceEF} was shown in \cite{garcia1987existence} for $p\in[1,\infty)$. Analytic solutions of \eqref{eq:pLaplaceEF} are known only for $p=1,2$, as far as we know. However, for $1<p<2$ the eigenfunctions can be numerically computed, for instance using \cite{cohen2018energy}. Eigenfunctions for different value of $p$ with Neumann boundary conditions are shown in Fig. \ref{Fig:eigenFun1dVaryP2-1}. Note, the eigenfunctions change in a continuous manner from a cosine function ($p=2$) to a step function (p=1). There are no general analytic solutions for the nonlinear $p$-Laplacian flow. We remark that the weak solution of $u_t(t)=\Delta_p(u(t))$, when it is initialized with a Dirac measure, is known as the Barenblatt solution \cite{barenblatt1952self} as explained in \cite{kamin1988fundamental}. For example, for $p=2,\,x\in\mathbb{R}^N$ we obtain a Gaussian kernel with variance proportional to $t$.

The $p$-Laplacian scale space, for $p\in[1,2]$, has increasingly attracted attention in recent years \cite{liu2016renormalized,chen2006variable,baravdish2015backward,huang2016level,chen2010image,yi2017variable,maiseli2015multi,baravdish2018damped,wei2012p}. In the studies \cite{kuijper2007p,kuijper2013image,kuijper2009geometrical} by Kuijper, this flow was analyzed in terms of gauge coordinates. Blomgren et al. \cite{blomgren1997total} suggested to design a flow with adaptive $p$, as $p$ directly controls edge preservation.

Other than image denoising and analysis, the $p$-Laplacian operator plays a crucial role in semi-supervised learning. A generalizing function in this domain is essential when there is significantly fewer labeled data compered to unlabeled \cite{calder2018game,flores2018algorithms}. See recent advances on this topic in  \cite{liu2018p,ma2018hypergraph}.
%
%
\begin{figure}
\centering
\includegraphics[width=.625\textwidth]{./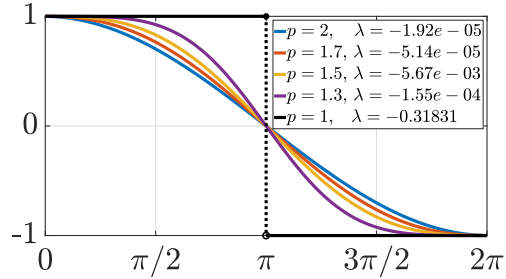}
\caption{Examples of eigenfunctions with increasing homogeneity, $p$. The edges are emphasized when $p\to 1$.}
\label{Fig:eigenFun1dVaryP2-1}
\end{figure}

Thus, signal representation based on $p$-Laplacian eigenfunctions for $1<p<2$ is called for. The suggested nonlinear decomposition, referred to as the \emph{$p$-transform}, is a generalization of the transforms in \cite{gilboa2013spectral,gilboa2014total,burger2016spectral}.
With the $p$-transform we can interpolate between different degrees of smoothness and unify the transform formulation in one expression for $p\in(1,2)$.

The main contributions  of this work are:
\begin{enumerate}
    \item Analytic solutions for evolutions based on $\gamma$-homogeneous operators are formulated for a certain class of initial conditions (nonlinear eigenfunctions).
    \item A signal decomposition related to the eigenfunctions of a $\gamma$-homogeneous operator, where $\gamma\in(0,1)$, is formulated.
    \item This facilitates signal decompositions with different degrees of smoothness.
    \item The proposed framework rigorously defines novel nonlinear decomposition, reconstruction, spectrum and filtering.
\end{enumerate}

The plan of this paper is as follows. We begin by examining general scale space flows. We put a special emphasize on flows, generated by homogeneous operators (e.g. $p$-Laplace operator).  We formulate necessary and sufficient conditions for obtaining a solution with separated variables. This allows the formulation of an analytic solution of these flows,
initialized with an eigenfunction (Sec. \ref{sec:ShP_FET}). Based on the analytic solution and the finite extinction time of this solution, we propose a transform that represents an eigenfunction as a Dirac delta in the transform domain (Sec. \ref{sec:pspectra}). A rigorous signal analysis framework is formulated, including spectrum, filtering, and reconstruction. We demonstrate this framework, using the $p$-Laplacian operator for different values of $p$ (Sec. \ref{sec:experiments}). Conclusions are discussed in Sec. \ref{sec:conc}.
The detailed proofs are provided in Appendices (in order to improve the reading flow). We precede with preliminary definitions and identities.

\section{Preliminaries}\label{sec:MatMot}
We recall some essential definitions that are used in the paper. Let $\mathcal{H}$ be an infinite dimensional Hilbert space, which is equipped with an Euclidean inner product, $\inp{\cdot}{\cdot}$, and norm $\norm{\cdot}=\sqrt{\inp{\cdot}{\cdot}}$. 

\begin{definition}[Monotone operator] Let $P$ be an operator on $\mathcal{X}$ in some Hilbert space $H$. Then, $P$ is a monotone operator if the following holds
\begin{equation}\label{eq:monotonicOperator}
    \inp{P(u)-P(v)}{u-v} \ge 0, \quad \forall u,v \in \mathcal{X}.
\end{equation}
\end{definition}
\begin{definition}[Maximally monotone operator]
A monotone operator with setting domain $\mathcal{X}$ is maximally monotone if there is no other monotone operator with a setting domain which properly contains $\mathcal{X}$.
\end{definition}
\begin{definition}[Kernel]The kernel of the operator $P$ is a set, $\mathcal{K}\subset \mathcal{X}$, defined by
\begin{equation}\label{eq:Kernel}
    \mathcal{K} = \left\{e\in\mathcal{X}:P(e)=0\right\}.
\end{equation}
\end{definition}
\begin{definition}[Orthogonal complement of the kernel]The orthogonal complement of the kernel, $\mathcal{K}$, of the operator $P$, is defined by
\begin{equation}\label{eq:OrthoKernel}
    \mathcal{K}^\perp = \left\{w\in\mathcal{X}:\inp{w}{e}=0,\quad \forall e\in\mathcal{K} \right\}.
\end{equation}
\end{definition}
\begin{definition}[Kernel orthogonality]\label{def:KernelOrthogonality}
We say $P$ is kernel orthogonal if $Img\{P\}\subseteq \mathcal{K}^\perp$, i.e.
\begin{equation}
    \inp{P(u)}{e}=0,\quad \forall e\in \mathcal{K}, \, \forall u\in \mathcal{X}.
\end{equation}
\end{definition}

\begin{definition}[Nonlinear flow]
We define a nonlinear flow by
\begin{equation}\label{eq:ss}\tag{\bf{Flow}}
\begin{split}
    u_t(t) = P(u(t)),  \qquad u(0)=f\in\mathcal{X},
\end{split}
\end{equation}
where $-P(\cdot)$ is a maximally monotone operator,  $P(0)=0$, and $f$ is the initial condition. The solution of \eqref{eq:ss} exists according to \cite{brezis1973ope}. We assume the solution is unique and differentiable, with respect to $t\in (0,\tau),\, \tau\in\mathbb{R}_{>0}$.
\end{definition}
\begin{definition}[Mass preserving flow]\label{def:GeneralizedMassConservation}
The nonlinear scale space \eqref{eq:ss} is mass preserving if 
\begin{equation*}
    \inp{u(t)}{e}=const,\quad\forall e\in \mathcal{K},\forall t\in \mathbb{R}.
\end{equation*}
\end{definition}
Mass conservation can be achieved if $Img\{P\}\perp Ker\{P\}$, i.e. if $P$ meets Definition \ref{def:KernelOrthogonality}.

\begin{definition}[Coercive operator]\label{def:CoerciveOperator}
Let $P$ be a $\gamma$-homogeneous operator. We say $P$ is a coercive operator if there is a positive constant $C$ that
\begin{equation}\label{eq:CoerciveOperator}
    \norm{u}^{\gamma +1}\leq C\inp{-P(u)}{u}, \quad \forall u\in K^\perp.
\end{equation}
\end{definition}

\begin{definition}[Homogeneity]
We say that $P$ is a $\gamma$-homogeneous operator (not necessarily integer) if the following relation holds
\begin{equation}\label{eq:homoOpe}
    P(cu)=c\abs{c}^{\gamma-1}\cdot P(u),\quad \forall c\in \mathbb{R}.
\end{equation}
\end{definition}
In the context of this paper, we formulate the gradient descent of \eqref{eq:pEnergy} as follows.
\begin{definition}[$p$-Flow] The $p$-flow is the gradient flow with respect to the $p$-Dirichlet energy, \eqref{eq:pEnergy},
\begin{equation}\label{eq:pFlow}\tag{\bf pFlow}
    \begin{split}
        u_t(t) =& \Delta_p u(t),\quad u(0)=f\in\mathcal{K}^\perp,
    \end{split}
\end{equation}
with homogeneous Neumann boundary conditions, where $\Delta_p$ is the $p$-Laplacian operator, Eq. \eqref{eq:pLaplaceOperator}.
\end{definition}
In this paper we focus on nonlinear flows where $P$ is a homogeneous operator. Note, that the $p$-Laplacian is a $p-1$ homogeneous operator. The $p$-Laplacian operator is maximally monotone since it is a subgradient of a convex functional. We will often omit the spatial index, $x$, or the temporal one, $t$, when the context is clear.
\begin{definition}[Finite extinction time] Let $u(t)$ be a solution of \eqref{eq:ss}. If there exists $T\in\mathbb{R}_{>0}$ such that
\begin{equation}\label{eq:defExtinctionTime}
\begin{split}
    u_t(t)=0, \quad \forall t\ge T
\end{split}
\end{equation}
then we say that \eqref{eq:ss} has a finite extinction time $T$, where $T$ is the minimal value of $t$ for which \eqref{eq:defExtinctionTime} holds.
\end{definition}
Note, that if the flow is mass preserving and the initial condition $f$ belongs to $\mathcal{K}^\perp$ then \eqref{eq:defExtinctionTime} implies $u(t)=0$ for all $t>T$.

\begin{definition}[Shape preserving flow]
We term a solution, $u(t)$, of \eqref{eq:ss} as a shape preserving flow if it has separated variables, i.e. 
\begin{equation}\label{eq:shapePreserving}\tag{\bf{SPF}}
\begin{split}
    u(t) = a(t)\cdot f.
\end{split}
\end{equation}
\end{definition}
The spatial shape of $u(t)$, $f$, is preserved over time whereas only the contrast changes. We term the contrast, $a(t)$, as the \emph{decay profile}. 
\begin{definition}[Nonlinear eigenfunction] $v\in \mathcal{X}$ is a nonlinear eigenfunction of the (nonlinear) operator $P$ if there exists $\lambda\in\mathbb{R}$ such that
\begin{equation}\label{eq:ef}\tag{\bf EF}
P(v) = \lambda \cdot v.
\end{equation}
Since $-P$ is assumed to be a maximally monotone operator the eigenvalue is non-positive. This can be easily verified by $\lambda \norm{v}^2 = \inp{P(v)}{v}=\inp{P(v)-P(0)}{v-0}\leq 0$ for any eigenfunction $v$.
\end{definition}
We will often refer to a nonlinear eigenfunction simply as an eigenfunction.
\subsection*{Fractional calculus background}
There are several definitions of fractional derivatives and integrals \cite{dalir2010applications}. We write below the most established ones, which are used in this paper.  
\begin{definition}[Fractional integrals in the Riemman-Liouville sense \cite{grigoletto2013fractional}]
Let $y$ be a real function $y:[a,b]\to \mathbb{R}$ in $L^1[a,b]$. The left sided and the right sided fractional integrals of order $\alpha\in\mathbb{R}_{> 0}$ are respectively defined as
\begin{subequations}\label{eq:LiouvilleIntegral}
\begin{equation}\label{eq:lowerLiouvilleIntegral}
    {I_{a^+}^\alpha\left\{y\right\}(t)}= \frac{1}{\Gamma(\alpha)}\int_{a}^t(t-\tau)^{\alpha -1}y(\tau)d\tau   
\end{equation}
\begin{equation}\label{eq:upperLiouvilleIntegral}
    I_{b^-}^\alpha\left\{y\right\}(t)= \frac{1}{\Gamma(\alpha)}\int_t^b (\tau-t)^{\alpha -1}y(\tau)d\tau,
\end{equation}
\end{subequations}
where $\Gamma(\cdot)$ is the extension of the factorial function to the real axis, for a positive integer $n$ we have $\Gamma(n)=(n-1)!$.  
\end{definition}
Explanations on the origins of the left and right handed definitions are given in \cite{richard2014fractional} page 36. The left sided definition is valid for $t>a$ and the right sided for $t<b$. In principle, these are direct extensions of Cauchy's formula. In this work we mostly use the right sided definition \eqref{eq:upperLiouvilleIntegral}. 
\begin{definition}[Fractional derivatives in the Riemman-Liouville sense \cite{grigoletto2013fractional}]
Let $y$ be a real function $y:[a,b]\to \mathbb{R}$ in $L^1[a,b]$. The right handed fractional derivative of order $\alpha\in \mathbb{R}_{\ge 0}$ of $y$ is
    \begin{equation}\label{eq:LiouvilleDerivative-}
        D_{b^-}^\alpha\left\{y\right\}(t)=(-1)^{\ceil{\alpha}} \frac{d^{\ceil{\alpha}}}{dt^{\ceil{\alpha}}}\left\{I_{b^-}^{{\ceil{\alpha}}-\alpha}\left\{{y(t)}\right\}\right\},
    \end{equation}
where $\ceil{\alpha}$ is the least integer greater than $\alpha$, and
$I_{b^-} \left\{{y(t)}\right\}$ is defined in \eqref{eq:upperLiouvilleIntegral}. 
\end{definition}

\begin{definition}[Space $I^\alpha_{b^-}\left(L^p\right)$ \cite{grigoletto2013fractional}]
The space $I^\alpha_{b^-}\left(L^p\right)$ is defined, for any $\alpha\in\mathbb{R}_{>0}$ and $p\ge1$, by
\begin{equation*}
    I^\alpha_{b^-}\left(L^p\right) :=\left\{y(t):y(t)=I^\alpha_{b^-}\left\{x(t)\right\},\, x(t)\in L^p(a,b)\right\}.
\end{equation*}
\end{definition}
The theoretical basis of the \acrshort{FTFC} \cite{grigoletto2013fractional} 
can be found in \cite{kilbas2006theory} Lemmas 2.4-2.7 and in \cite{samko1993fractional} Theorem 2.4.
\begin{definition}[\acrlong{FTFC}]
If $y(t)\in I^\alpha_{b^-}(L^1)$ then 
\begin{equation}\label{eq:FTFC}
    I^\alpha_{b^-}\left\{D^\alpha_{b^-}\left\{y\right\}\right\}(t)=y(t).
\end{equation}
\end{definition}
The aforementioned definitions hold for $\alpha\in\mathbb{C}$ with a positive real part. However, in our work $\alpha$ is limited to the real numbers.

\begin{assumption}\label{ass:flowAssumption}In this paper we analyze the scale space, Eq. \eqref{eq:ss}, where $P$ is a $\gamma$-homogeneous operator and admits the Definitions \ref{def:GeneralizedMassConservation} and \ref{def:CoerciveOperator}.
\end{assumption}
If $P$ is a subgradient of a convex homogeneous functional part of the Assumption \ref{ass:flowAssumption} can be proven \cite{bungert2019asymptotic}.
\textcolor{black}{
We note that subgradients admit mass preservation. Coercivity of the operator implies coercivity of the functional. For functionals which are norms, coercivity is given, in finite dimensions, due to norm equivalence.}

\section{Towards the $p$-transform - shape preserving flows}\label{sec:ShP_FET}
The \acrshort{TV}-transform \cite{gilboa2013spectral,gilboa2014total} is motivated by the analytic solution of \eqref{eq:pFlow}, for $p=1$, initialized with a nonlinear eigenfunction \eqref{eq:ef}.
The analytic solution is
\begin{equation}\label{eq:TVsolution}
    u(x,t)=\left(1+\lambda\cdot t\right)^+\cdot f(x),
\end{equation}
where $q^+=\max\{0,q\}$ \cite{bellettini2002total}. We refer to three attributes of this solution as the theoretical basis of the \acrshort{TV}-transform: the solution is in the form of \eqref{eq:shapePreserving}, its decay profile is linear, and it extincts in finite time. 

These attributes are studied here for a homogeneous operator, which were firstly introduced in \cite{cohen:hal-01870019}. In general, the necessary condition of the solution to be in the form of \eqref{eq:shapePreserving} is that the initial condition $f$ is an eigenfunction. The following theorem asserts that if $P$ is a homogeneous operator this condition is also sufficient.

\subsection*{Shape preserving flows for homogeneous operators}
\begin{theorem}\label{theo:homoOperator} Let the solution of \eqref{eq:ss} exist and $P$ be a $\gamma$-homogeneous operator. The solution is shape preserving iff the initial condition $u(t=0)=f$ is an eigenfunction with eigenvalue $\lambda$, i.e. admits \eqref{eq:ef}. In that case, the decay profile is 
\begin{equation}\label{eq:u_analytic}
a(t)=\left[\left((1-\gamma)\lambda t+1\right)^+\right]^{\frac{1}{1-\gamma}}.
\end{equation}
\end{theorem}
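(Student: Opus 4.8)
The plan is to establish both implications of the equivalence, the core observation being that $\gamma$-homogeneity collapses the operator equation \eqref{eq:ss} into a scalar ordinary differential equation once the spatial profile is frozen. Throughout I substitute the separated ansatz $u(t)=a(t)f$ from \eqref{eq:shapePreserving} into \eqref{eq:ss}, using that $f\neq 0$ together with the stated uniqueness of the solution.

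For the necessity direction ($\Rightarrow$), suppose the solution is shape preserving, $u(t)=a(t)f$. Since $u$ is assumed differentiable and $a(t)=\inp{u(t)}{f}/\norm{f}^2$, the scalar $a(t)$ is differentiable with $a(0)=1$. Differentiating the ansatz and evaluating \eqref{eq:ss} at $t=0$ gives $a'(0)f = u_t(0)=P(u(0))=P(f)$, so $f$ satisfies \eqref{eq:ef} with $\lambda=a'(0)$. Notably this direction uses only differentiability of the profile, not homogeneity.

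For the sufficiency direction ($\Leftarrow$), assume $f$ satisfies $P(f)=\lambda f$. I construct a candidate solution of separated form and invoke uniqueness. Substituting $u=a(t)f$ into \eqref{eq:ss} and applying \eqref{eq:homoOpe} with $c=a(t)\ge 0$, so that $P(a f)=a^{\gamma}P(f)=a^{\gamma}\lambda f$, the equation reduces to the scalar problem $a'(t)=\lambda a(t)^{\gamma}$ with $a(0)=1$. Separating variables yields $a^{1-\gamma}/(1-\gamma)=\lambda t + 1/(1-\gamma)$, i.e. $a^{1-\gamma}=(1-\gamma)\lambda t + 1$, which is exactly \eqref{eq:u_analytic} before the extinction correction. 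Because $-P$ is maximally monotone, the eigenvalue is non-positive, $\lambda\le 0$, so the base $(1-\gamma)\lambda t+1$ decreases and vanishes at the finite time $T=1/\left((1-\gamma)\abs{\lambda}\right)$.

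The main obstacle, and where care is required, is continuing the solution past $t=T$ and certifying that the capped profile is genuinely the solution. For $t>T$ the base turns negative and the (generally non-integer) power $1/(1-\gamma)$ is undefined, which is precisely why the positive part $(\cdot)^+$ enters; I set $a(t)=0$ there, so $u(t)\equiv 0$, which solves \eqref{eq:ss} since $u_t=0=P(0)=P(u)$ by the assumption $P(0)=0$. One then verifies that the profile \eqref{eq:u_analytic} is $C^1$ across $t=T$ for $\gamma\in(0,1)$, since $a'(t)=\lambda\left((1-\gamma)\lambda t+1\right)^{\gamma/(1-\gamma)}\to 0$ as $t\to T^-$, matching the value $0$ for $t>T$, so the glued function is a differentiable solution on all of $(0,\tau)$. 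Finally, the uniqueness assumption identifies this constructed separated function with the actual solution of \eqref{eq:ss}, which completes the claim.
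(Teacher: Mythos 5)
Your proof is correct and follows essentially the same route as the paper's: for necessity you compare $u_t(0)=P(f)$ with $a'(0)f$ at $t=0$, and for sufficiency you construct the separated candidate from the scalar ODE $a'=\lambda a\abs{a}^{\gamma-1}$, $a(0)=1$, and invoke uniqueness of the flow to identify it with the actual solution. The extra care you take at the extinction time — the $(\cdot)^+$ truncation, the $C^1$ gluing at $T$ for $\gamma\in(0,1)$, and checking that $u\equiv 0$ solves the flow via $P(0)=0$ — is detail the paper's proof leaves implicit.
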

The corresponding proof can be found in \ref{sec:appProofTheohomoOperator}. Applying this theorem to the $p$-Laplacian operator, we can conclude the following.
\begin{corollary} If \eqref{eq:pFlow} is initiated with an eigenfunction, $f$, then
\begin{equation}\label{eq:pFlowAnalytic}
    u(t) = a(t)\cdot f = \left[\left((2-p)\lambda t+1\right)^+\right]^{\frac{1}{2-p}}\cdot f.    
\end{equation}
\end{corollary}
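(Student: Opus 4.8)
The plan is to obtain this statement as a direct specialization of \cref{theo:homoOperator}, with no fresh analysis required. First I would observe that the \eqref{eq:pFlow} evolution is precisely the generic flow \eqref{eq:ss} under the choice $P=\Delta_p$, so it suffices to check that $\Delta_p$ satisfies the hypotheses of \cref{theo:homoOperator} and then read off the conclusion.

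The two hypotheses to verify are existence of the solution and $\gamma$-homogeneity. Existence is already secured: as noted in the preliminaries, $-\Delta_p$ is the subgradient of the convex functional $J_p$ in \eqref{eq:pEnergy}, hence maximally monotone, so a solution of \eqref{eq:ss} exists by \cite{brezis1973ope}. For homogeneity, I would recall the identity $\Delta_p(cu)=c\abs{c}^{p-2}\Delta_p(u)$, which follows directly from the definition \eqref{eq:pLaplaceOperator} by pulling the scalar $c$ through the gradient and through the factor $\abs{\nabla u}^{p-2}$; comparing with \eqref{eq:homoOpe} this identifies $\Delta_p$ as a $\gamma$-homogeneous operator with $\gamma = p-1$.

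With these checks in place, the hypothesis of the corollary --- that the initial condition $f$ is an eigenfunction in the sense of \eqref{eq:ef} --- is exactly the condition required by the \emph{if} direction of \cref{theo:homoOperator}. The theorem therefore yields that the solution is shape preserving, $u(t)=a(t)\cdot f$, with decay profile \eqref{eq:u_analytic}. The final step is a substitution: setting $\gamma = p-1$ gives $1-\gamma = 2-p$ and $\frac{1}{1-\gamma}=\frac{1}{2-p}$, so that \eqref{eq:u_analytic} becomes $a(t)=\left[\left((2-p)\lambda t+1\right)^+\right]^{\frac{1}{2-p}}$, which is exactly \eqref{eq:pFlowAnalytic}.

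Since the entire argument is a specialization, there is no genuine obstacle; the only point demanding any care is confirming that $\Delta_p$ truly meets all the standing assumptions of \cref{theo:homoOperator} --- in particular the homogeneity bookkeeping that pins down $\gamma = p-1$ rather than $p$ --- after which the result is immediate.
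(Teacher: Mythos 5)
Your proposal is correct and follows exactly the paper's route: the paper proves this corollary in a single line by setting $\gamma = p-1$ in \cref{theo:homoOperator}, which is precisely your specialization, with the substitution $1-\gamma = 2-p$ recovering \eqref{eq:pFlowAnalytic}. Your additional checks of the hypotheses (maximal monotonicity of $-\Delta_p$ via the subgradient of $J_p$, and the $(p-1)$-homogeneity identity $\Delta_p(cu)=c\abs{c}^{p-2}\Delta_p(u)$) are sound and merely make explicit what the paper leaves implicit.
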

It is simply shown by setting $\gamma=p-1$ in Theorem \ref{theo:homoOperator}. In addition, for $1\le p<2$ and $\lambda<0$ the $p$-flow has finite extinction time $T$
\begin{equation}\label{eq:gExtinctionTime}
    T=\frac{1}{(p-2)\lambda}.
\end{equation}
For $p>2$ the $p$-flow does not vanish. For 
$p\to2$ the solution \eqref{eq:pFlowAnalytic} becomes
\begin{equation*}
u(t) = e^{\lambda t}f.
\end{equation*}
This can be shown by using the limit $\left(1+\frac{1}{n}\right)^n \underset{n\to \infty}{\longrightarrow}e$. This result is expected from the heat equation. Eq. \eqref{eq:pFlowAnalytic} coincides with \eqref{eq:TVsolution} for $p=1$. 

\paragraph{Extinction time for arbitrary initial conditions} 

\begin{proposition}[Finite extinction for a coercive $\gamma$-homogeneous operator]\label{prop:finiteTimeExtinction} Let $P$ a $\gamma$-homogeneous and coercive operator in the sense of \eqref{eq:CoerciveOperator}. Let $u(t)$ be the solution of mass preserving flow \eqref{eq:ss} where $f\in \mathcal{K}^\perp$. Then there \textcolor{black}{exists $C>0$ such that $u(t)=0\,\,\forall t\geq T=\frac{\norm{f}^{1-\gamma}}{(1-\gamma)C}>0$.}
\end{proposition}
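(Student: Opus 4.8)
The plan is to treat the squared norm $\norm{u(t)}^2$ as a Lyapunov functional and to convert the coercivity estimate into a closed scalar differential inequality whose comparison ODE extinguishes in finite time. First I would differentiate along the trajectory: since the solution is assumed differentiable, $\frac{d}{dt}\norm{u(t)}^2=2\inp{u_t(t)}{u(t)}=2\inp{P(u(t))}{u(t)}$, where the second equality uses the flow equation \eqref{eq:ss}.

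Before invoking coercivity I must check that the trajectory stays in $\mathcal{K}^\perp$, since \eqref{eq:CoerciveOperator} is only assumed there. This follows from mass preservation (Definition~\ref{def:GeneralizedMassConservation}): for every $e\in\mathcal{K}$ we have $\inp{u(t)}{e}=\inp{f}{e}=0$, because $f\in\mathcal{K}^\perp$, and hence $u(t)\in\mathcal{K}^\perp$ for all $t$. Coercivity then yields $\inp{-P(u)}{u}\ge \tfrac{1}{C}\norm{u}^{\gamma+1}$, so that $\frac{d}{dt}\norm{u}^2 \le -\tfrac{2}{C}\norm{u}^{\gamma+1}$.

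Now I set $E(t)=\norm{u(t)}^2$ and $\beta=\tfrac{\gamma+1}{2}$, noting $\beta<1$ precisely because $\gamma<1$ (the regime in which the claimed $T>0$ makes sense). The inequality becomes $\dot E \le -\tfrac{2}{C}E^{\beta}$. On any interval where $E>0$ I would differentiate the sublinear power: $\frac{d}{dt}E^{1-\beta}=(1-\beta)E^{-\beta}\dot E \le -(1-\beta)\tfrac{2}{C}=-\tfrac{1-\gamma}{C}$. Thus $E^{1-\beta}$ decreases at least linearly; since $1-\beta=\tfrac{1-\gamma}{2}$ and $E(0)^{1-\beta}=\norm{f}^{1-\gamma}$, it must reach zero no later than a finite time $T$ proportional to $\norm{f}^{1-\gamma}$, of exactly the stated form $\tfrac{\norm{f}^{1-\gamma}}{(1-\gamma)C}$ with a positive constant fixed by the coercivity estimate. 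Once $E(T)=0$ we have $u(T)=0$, and because $P(0)=0$ the flow satisfies $u_t=0$ thereafter, so $u(t)=0$ for all $t\ge T$, which is the asserted finite extinction.

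The main obstacle is the rigorous handling of the differential inequality near the extinction instant, where $E^{-\beta}$ blows up: the chain-rule step above is only licit while $E>0$, so I would argue on the open set $\{t:E(t)>0\}$ and pass to its closure using continuity of $E$, together with $P(0)=0$ and uniqueness to exclude $E$ becoming positive again after vanishing. A secondary technical point is justifying $\frac{d}{dt}\norm{u}^2=2\inp{u_t}{u}$ and the absolute continuity of $E$, which rest on the assumed differentiability of the solution of \eqref{eq:ss}; the precise numerical value of the constant in $T$ then merely tracks the normalization adopted in \eqref{eq:CoerciveOperator}.
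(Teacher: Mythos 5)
Your proof is correct, and it is essentially the argument behind the result the paper relies on: the paper gives no inline proof of this proposition, deferring instead to Theorem 2.13 of \cite{bungert2019asymptotic}, which proceeds exactly as you do --- differentiate $\tfrac{1}{2}\norm{u(t)}^2$ along the flow, use mass preservation to keep the trajectory in $\mathcal{K}^\perp$ so that coercivity applies, and close the sublinear differential inequality by ODE comparison, with the post-extinction identification $u\equiv 0$ following from $P(0)=0$ and uniqueness. Two cosmetic points: your bound comes out as $T=\frac{C\norm{f}^{1-\gamma}}{1-\gamma}$ with $C$ the constant of \eqref{eq:CoerciveOperator}, which matches the stated $T=\frac{\norm{f}^{1-\gamma}}{(1-\gamma)C}$ only after replacing $C$ by its reciprocal (harmless, since the proposition only asserts existence of some $C>0$), and your local symbol $\beta=\frac{\gamma+1}{2}$ collides with the paper's $\beta=\frac{1}{1-\gamma}$ from \eqref{eq:orderOfDecay}, so it should be renamed to avoid confusion.
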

The corresponding proof can be found in \textcolor{black}{\cite{bungert2019asymptotic} (Theorem 2.13)}.

\section{The $p$-Framework}\label{sec:pspectra}
We now formulate a framework, which includes decomposition (transform), reconstruction (inverse-transform), spectrum, and filtering. In this section, we limit ourselves to a finite dimensional Hilbert space, i.e. $f\in \mathcal{X}\subset \mathcal{H}$ where $\mathcal{H}$ is a finite dimensional Hilbert space. Moreover, we limit ourselves to the flow and the operator $P$ that admit the conditions of Proposition \ref{prop:finiteTimeExtinction}. Note that the scale (time) parameter is continuous.
We first define the fundamentals of this transform, then we discuss
its attributes.

\subsection{Definitions}
Let us denote
\begin{equation}\label{eq:orderOfDecay}
    \beta=\frac{1}{1-\gamma}.
\end{equation}
\begin{definition}[$p$-Transform]\label{def:p-LapSpec}
The $p$-transform is defined by,
\begin{equation}\label{eq:pTransform}
\begin{split}
    \phi(x,t) = \frac{t^{\beta}}{\Gamma(\beta+1)}{D}^{\beta + 1}_{b^-}\{u(x,t)\},
\end{split}
\end{equation}
where $u(t)$ is the solution of \eqref{eq:ss}, generated by $\gamma$-homogeneous operator, where $\gamma\in(0,1)$, $\beta$ is defined in \eqref{eq:orderOfDecay} and $D^{\beta+1}_{b^-}$ is defined in \eqref{eq:LiouvilleDerivative-} and is with respect to $t$. In addition, $b$ is greater than the extinction time of $u(t)$.
\end{definition}
\begin{definition}[Inverse $p$-transform]\label{def:invP-LapSpec} The inverse $p$-transform is defined by
\begin{equation}\label{eq:pInverseTransform}
\begin{split}
    \hat{f}(x) = \int_0^\infty\phi(x,t)dt.
\end{split}
\end{equation}
\end{definition}
\begin{definition}[Filtering]
Let $h(t)$ be a real function (a filter). The filtering of $f$ by the filter $h(t)$ is 
\begin{equation}\label{eq:recovFilter}
\begin{split}
    f_h(x)=\int_0^\infty \phi(x,t)\cdot h(t)dt.
\end{split}
\end{equation}
\end{definition}
The filtering in the transform domain, $t$, is a simple amplification (or attenuation) of $\phi(x,t)$ at every scale $t$. 

\begin{definition}[$p$-Spectrum]\label{def:pSpectrum}
The spectrum of $f$ at any scale $t$ is defined by,
\begin{equation}\label{eq:pSpectrum}
S(t)=\inp{f}{\phi(t)}.
\end{equation}
\end{definition}

\subsection{Attributes}
\subsubsection{The ``frequency'' representation of an eigenfunction}
\begin{theorem}[The $p$-transform of an eigenfunction]\label{theo:EFspectrum}
The $p$-transform, \eqref{eq:pTransform}, of an eigenfunction $f$ with eigenvalue $\lambda$ is:
\begin{equation}\label{eq:ptransformEF}
\begin{split}
    \phi(x,t) = f(x)\cdot t^{\beta}\left[(\gamma -1)\lambda\right]^{\beta+1}\cdot\delta(1-(\gamma -1)\lambda\cdot t),
\end{split}
\end{equation}
where $\delta(\cdot)$ is the Dirac delta function.
\end{theorem}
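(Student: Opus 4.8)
The plan is to reduce the statement to a one-dimensional computation of a right-sided Riemann--Liouville fractional derivative of the decay profile, and then to handle the truncation carefully so that a Dirac delta emerges. Since the $p$-transform \eqref{eq:pTransform} acts on the temporal variable only and $x$ enters merely as a multiplicative factor, I would first invoke \cref{theo:homoOperator} to write the solution initialized with the eigenfunction $f$ as $u(x,t)=a(t)\,f(x)$ with $a(t)=\left[\left((1-\gamma)\lambda t+1\right)^+\right]^{\beta}$. Setting $k:=(\gamma-1)\lambda>0$ (positive since $\lambda<0$ and $\gamma\in(0,1)$) and $T:=1/k$, the value at which the profile vanishes, the profile rewrites as the truncated power $a(t)=k^{\beta}\left[(T-t)^{+}\right]^{\beta}$, supported on $[0,T]$. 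Everything then hinges on evaluating $D^{\beta+1}_{b^-}\{a\}(t)$ with $b>T$.

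For the fractional derivative I would write $n:=\ceil{\beta+1}$ and $\mu:=n-(\beta+1)$, and first compute the fractional integral $I^{\mu}_{b^-}\{a\}(t)$ appearing in \eqref{eq:LiouvilleDerivative-}. Because $a$ vanishes on $[T,b]$, the integral \eqref{eq:upperLiouvilleIntegral} runs only from $t$ to $T$; the substitution $\tau=t+(T-t)s$ turns it into a Beta integral $\int_0^1 s^{\mu-1}(1-s)^{\beta}\,ds=B(\mu,\beta+1)$. Collecting the $\Gamma$-factors and using $\mu+\beta=n-1$ and $\mu+\beta+1=n$, this collapses to a \emph{truncated polynomial},
\begin{equation*}
I^{\mu}_{b^-}\{a\}(t)=\frac{k^{\beta}\,\Gamma(\beta+1)}{(n-1)!}\left[(T-t)^{+}\right]^{n-1}.
\end{equation*}

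The crux --- and the step I expect to be the main obstacle --- is differentiating this $n$ times. Naively, the $n$-th derivative of a degree-$(n-1)$ power is zero; the nonzero answer comes entirely from the truncation at $t=T$. I would therefore treat the derivative distributionally: since $\left[(T-t)^{+}\right]^{n-1}$ is $C^{n-2}$ with $(n-1)$-st derivative equal to $(-1)^{n-1}(n-1)!\,H(T-t)$, one more derivative produces $(-1)^{n}(n-1)!\,\delta(T-t)$ via $\tfrac{d}{dt}H(T-t)=-\delta(T-t)$. Multiplying by the prefactor $(-1)^{n}$ from \eqref{eq:LiouvilleDerivative-}, the two sign factors cancel and the factorials cancel the $1/(n-1)!$, leaving $D^{\beta+1}_{b^-}\{a\}(t)=k^{\beta}\,\Gamma(\beta+1)\,\delta(T-t)$.

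Finally I would substitute this into \eqref{eq:pTransform}: the $\Gamma(\beta+1)$ cancels, giving $\phi(x,t)=f(x)\,t^{\beta}k^{\beta}\,\delta(T-t)$. To match the stated form I would apply the delta scaling rule $\delta(T-t)=\delta(t-T)=k\,\delta(1-kt)$ (since the argument $1-kt$ has simple root $t=T$ with slope $-k$, $k>0$), which absorbs one extra factor of $k$ and yields $\phi(x,t)=f(x)\,t^{\beta}\,[(\gamma-1)\lambda]^{\beta+1}\,\delta(1-(\gamma-1)\lambda\,t)$, as claimed. As a robustness check I would verify the integer case $\beta+1\in\mathbb{N}$ (so $\mu=0$), where $I^{0}_{b^-}$ is the identity and the same delta arises directly from the ordinary $n$-th distributional derivative of the truncated power, confirming the formula is continuous across that boundary.
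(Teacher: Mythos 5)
Your proposal is correct and follows essentially the same route as the paper's own proof: both reduce the claim to computing the right-sided Riemann--Liouville derivative of the truncated-power decay profile $a(t)=k^{\beta}\left[(T-t)^{+}\right]^{\beta}$, show via a Beta-function evaluation that its fractional integral of order $\ceil{\beta+1}-(\beta+1)$ is a truncated polynomial, and then obtain the Dirac delta by distributional differentiation of that truncated polynomial. The only differences are presentational: the paper reaches the same truncated-polynomial identity indirectly, by a change of variables applied to Euler's formula for the left-sided integral of a monomial (its Appendix C), and leaves the Heaviside-to-delta step and the final rescaling $\delta(t-T)=k\,\delta(1-kt)$ implicit, whereas you compute the right-sided integral directly, make the sign and factorial bookkeeping explicit, and additionally note the choice $b>T$ and the integer-order edge case.
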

The proof can be found in \ref{sec:appProofTheoEFspectrum}.
\subsubsection{Reconstruction}
\begin{proposition}[Reconstruction]\label{pro:invP-LapSpec} Let $u(t)$ be the solution of \eqref{eq:ss} and belong to $I^{\beta + 1}_{b^-}\left(L^1\right)$. Then $f=u(0)$ can be reconstructed by the inverse transform, Def. \ref{def:invP-LapSpec}, $f=\hat{f}$. 
\end{proposition}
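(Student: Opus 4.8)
The plan is to recognize that the inverse transform \eqref{eq:pInverseTransform} is, up to the support considerations discussed below, exactly the right-sided fractional integral $I^{\beta+1}_{b^-}$ of $D^{\beta+1}_{b^-}\{u\}$ evaluated at the left endpoint $t=0$, so that the claim collapses to a single application of the \acrshort{FTFC}, \eqref{eq:FTFC}. First I would substitute the definition of the $p$-transform \eqref{eq:pTransform} into \eqref{eq:pInverseTransform}, giving
\begin{equation*}
\hat{f}(x)=\int_0^\infty \phi(x,t)\,dt=\frac{1}{\Gamma(\beta+1)}\int_0^\infty t^{\beta}\,D^{\beta+1}_{b^-}\{u(x,t)\}\,dt .
\end{equation*}
The key structural observation is that, comparing with the right-sided fractional integral \eqref{eq:upperLiouvilleIntegral} of order $\alpha=\beta+1$ evaluated at the left endpoint $t=0$, one has
\begin{equation*}
I^{\beta+1}_{b^-}\{g\}(0)=\frac{1}{\Gamma(\beta+1)}\int_0^b (\tau-0)^{\beta}\,g(\tau)\,d\tau=\frac{1}{\Gamma(\beta+1)}\int_0^b \tau^{\beta}\,g(\tau)\,d\tau ,
\end{equation*}
so that with $g=D^{\beta+1}_{b^-}\{u\}$ the integrand of $\hat{f}$ matches precisely the integrand of $I^{\beta+1}_{b^-}\{D^{\beta+1}_{b^-}\{u\}\}(0)$.

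The main step is to reconcile the upper limit $\infty$ in the inverse transform with the upper limit $b$ required by the fractional integral. Here I would invoke the finite extinction time $T$ guaranteed by Proposition \ref{prop:finiteTimeExtinction}, together with the choice $b>T$ from Definition \ref{def:p-LapSpec}. Since $u(t)=0$ for all $t\ge T$, for any $t\in(T,b)$ the inner integral defining $I^{\ceil{\beta+1}-(\beta+1)}_{b^-}\{u\}(t)$ in \eqref{eq:LiouvilleDerivative-} runs over $[t,b]$, where $u$ vanishes, and is therefore identically zero; differentiating yields $D^{\beta+1}_{b^-}\{u\}(t)=0$, hence $\phi(x,t)=0$ on $(T,b)$ and beyond. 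Consequently $\int_0^\infty \phi(x,t)\,dt=\int_0^b \phi(x,t)\,dt$, and the two displays above coincide, giving $\hat{f}(x)=I^{\beta+1}_{b^-}\{D^{\beta+1}_{b^-}\{u\}\}(x,0)$.

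Finally, since by hypothesis $u(t)\in I^{\beta+1}_{b^-}(L^1)$, the \acrshort{FTFC} \eqref{eq:FTFC} applies with $\alpha=\beta+1$ and gives $I^{\beta+1}_{b^-}\{D^{\beta+1}_{b^-}\{u\}\}(t)=u(t)$; evaluating at $t=0$ and using $u(0)=f$ yields $\hat{f}(x)=u(x,0)=f(x)$, as claimed. I expect the delicate point to be the support argument of the middle paragraph, in particular verifying that the \emph{derivative} $D^{\beta+1}_{b^-}\{u\}$, and not merely $u$ itself, vanishes on $(T,b)$, so that extending the integral to infinity introduces no spurious contribution and the endpoint evaluation at $t=0$ is legitimate. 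Convergence and integrability of the integrand near $t=0$ and $t=b$ is precisely what the membership $u\in I^{\beta+1}_{b^-}(L^1)$ is invoked to secure.
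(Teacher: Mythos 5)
Your proposal is correct and follows essentially the same route as the paper's proof: truncate $\int_0^\infty\phi\,dt$ to $\int_0^b\phi\,dt$ using finite extinction and $b>T$, recognize the result as $I^{\beta+1}_{b^-}\bigl\{D^{\beta+1}_{b^-}\{u\}\bigr\}$ evaluated at $t=0$, and conclude by the \acrshort{FTFC} \eqref{eq:FTFC}. Your explicit verification that $D^{\beta+1}_{b^-}\{u\}$ (not just $u$) vanishes on $(T,b)$ is a worthwhile detail that the paper compresses into the remark that $\phi$ has finite support.
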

\begin{proof} Let us examine expression \eqref{eq:pInverseTransform}. From the finite extinction, there exists $T>0$ which admits \eqref{eq:defExtinctionTime}. Moreover, the solution is zero from that time as $f\in\mathcal{K}^\perp$. Then, $\phi(t)$ has a finite support in time. Reminding $b>T$, we can rewrite \eqref{eq:pInverseTransform} as  
\begin{equation*}
    \begin{split}
       \hat{f}=\int_0^\infty\phi(\tau)d\tau &= \int_0^{b^-}\frac{\tau^{\beta}}{\Gamma(\beta+1)}{D}^{\beta + 1}_{b^-}\{u\}(\tau)d\tau\\
        &\underbrace{=}_{\textrm{Eq. }\eqref{eq:upperLiouvilleIntegral}}\eval{I^{\beta+1}_{b^-}\left\{D^{\beta+1}_{b^-}\left\{u\right\}\right\}(t)}_{t=0}
        \underbrace{=}_{\textrm{Eq. } \eqref{eq:FTFC}}u(0)=f.
    \end{split}
\end{equation*}
\end{proof}
\begin{remark}
The assumption on the solution, $u$, might be relaxed by assuming that $u(t)\in L^1(0,b)$ and using the relevant formulation of the \acrlong{FTFC} (Theorem 2 in \cite{grigoletto2013fractional}).
\end{remark} 

\subsubsection{Filtering}
As shown in \eqref{eq:gExtinctionTime}, the extinction time is inverse proportional to the absolute eigenvalue for all $\gamma\in[0,1)$. It is well known that in linear decomposition (such as the Fourier transform) every eigenfunction is represented as a delta function in the transform domain. 
We follow the definitions of ideal filters in \acrshort{TV}-spectral decomposition (see \cite{gilboa2014total}) and formulate an ideal low pass filter with a cutoff at $t_1$ by
\begin{equation}\label{eq:idealLPF}
h_{LPF,t_1}(t) =
    \begin{cases}
        0& t<t_1\\
        1& t\geq t_1
     \end{cases}.
\end{equation}
The ideal high pass, band pass, and band stop filters can be defined in a similar manner (see \cite{gilboa2014total}). Another special case of low pass filter (not ideal) is
\begin{equation}\label{eq:liouvilleFilter}
h(t) =
    \begin{cases}
        0& t<t_1\\
        \left[\frac{t-t_1}{t}\right]^\beta& t\geq t_1
     \end{cases}.
\end{equation}
Substituting \eqref{eq:liouvilleFilter} and \eqref{eq:pTransform} in \eqref{eq:recovFilter} yields the \acrshort{FTFC}, Eq. \eqref{eq:FTFC}, at $t=t_1$, which immediately gives us
\begin{equation}
    f_{h}(x) = \int_0^\infty \phi(x,t) h(t)dt= u(x,t_1),
\end{equation}
where $u(x,t_1)$ is the solution of \eqref{eq:ss} at time $t_1$. Thus, the scale space can be interpreted as a specific type of LPF.

\subsubsection{Parseval-type identity}
Based on Def. \ref{def:pSpectrum}, the following Parseval-type identity holds,
\begin{equation*}
\begin{split}
\int_0^\infty S(t) dt=&\int_0^\infty \inp{f}{\phi(t)} dt
=\inp{f}{\int_0^\infty \phi(t)dt}
=\inp{f}{f}=\norm{f}^2.
\end{split}
\end{equation*}
\paragraph{Relation to the \acrshort{TV}-transform} The methodology we use to formulate the $p$-transform is inspired by the \acrshort{TV}-transform \cite{gilboa2013spectral,gilboa2014total} and  by its extension to one-homogeneous functional  decomposition 
\cite{burger2016spectral,bungert2019nonlinear,schmidt2018inverse}. It was found useful in several image-processing applications, e.g. for denoising \cite{moeller2015learning}, segmentation \cite{zeune2017multiscale} and image fusion \cite{hait2018spectral}. \textcolor{black}{The regularity properties of the time derivative for spectral-TV are addressed in \cite{bungert2019nonlinear} }. It can be seen that the $p$-transform is a generalization of the previous studies and the \acrshort{TV}-transform is obtained by assigning $\gamma=0$ ($\beta=1$). 

\begin{figure}[htb]
\captionsetup[subfigure]{justification=centering}
\subfloat[Eigenfunction (1D), $\lambda=-0.0059$]
  {
\includegraphics[width=0.5\textwidth]{./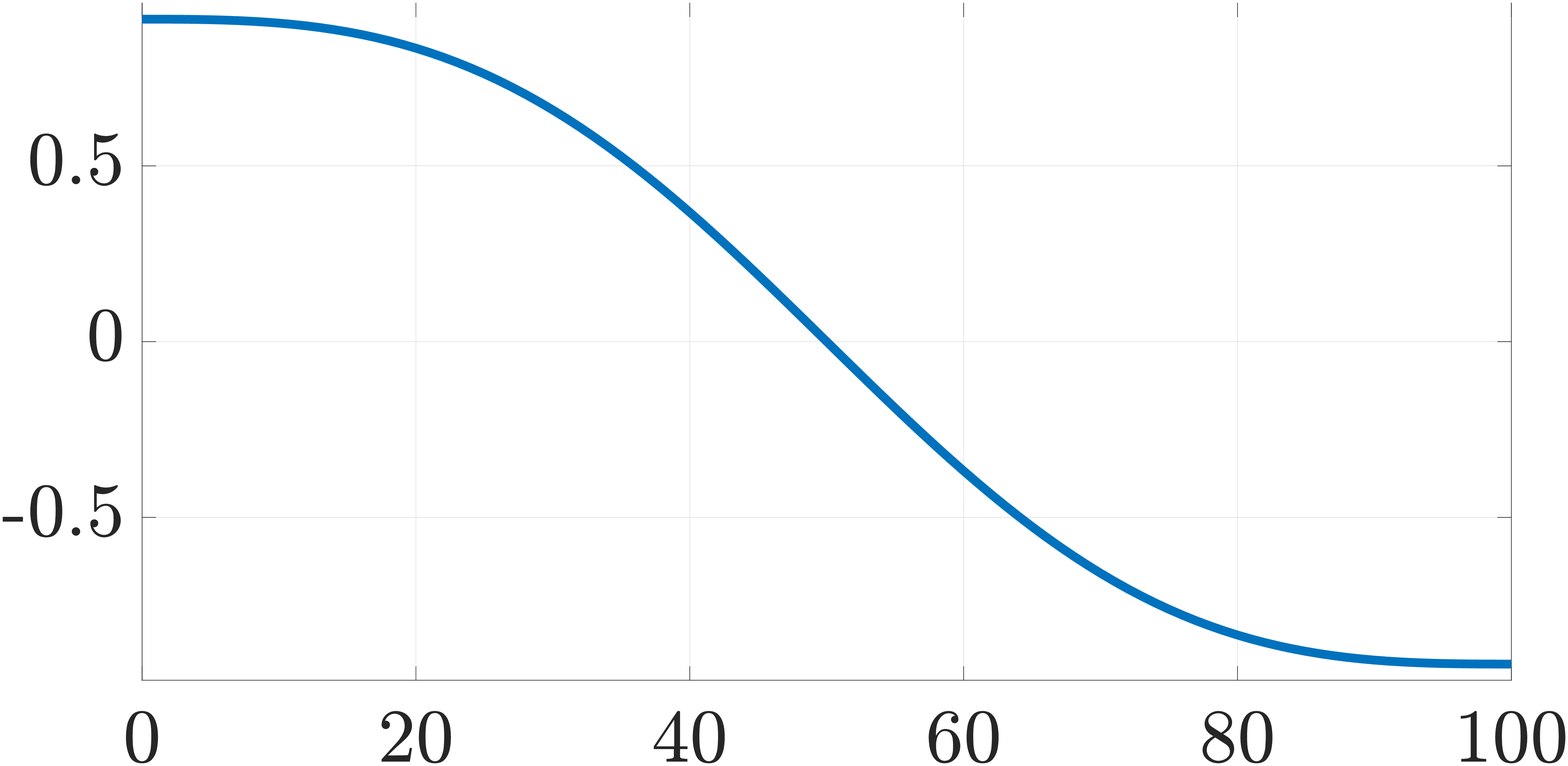}
}
\subfloat[{The 1D eigenfunction decay (theoretical and numerical results) vs. time}]
  {
\includegraphics[width=0.5\textwidth]{./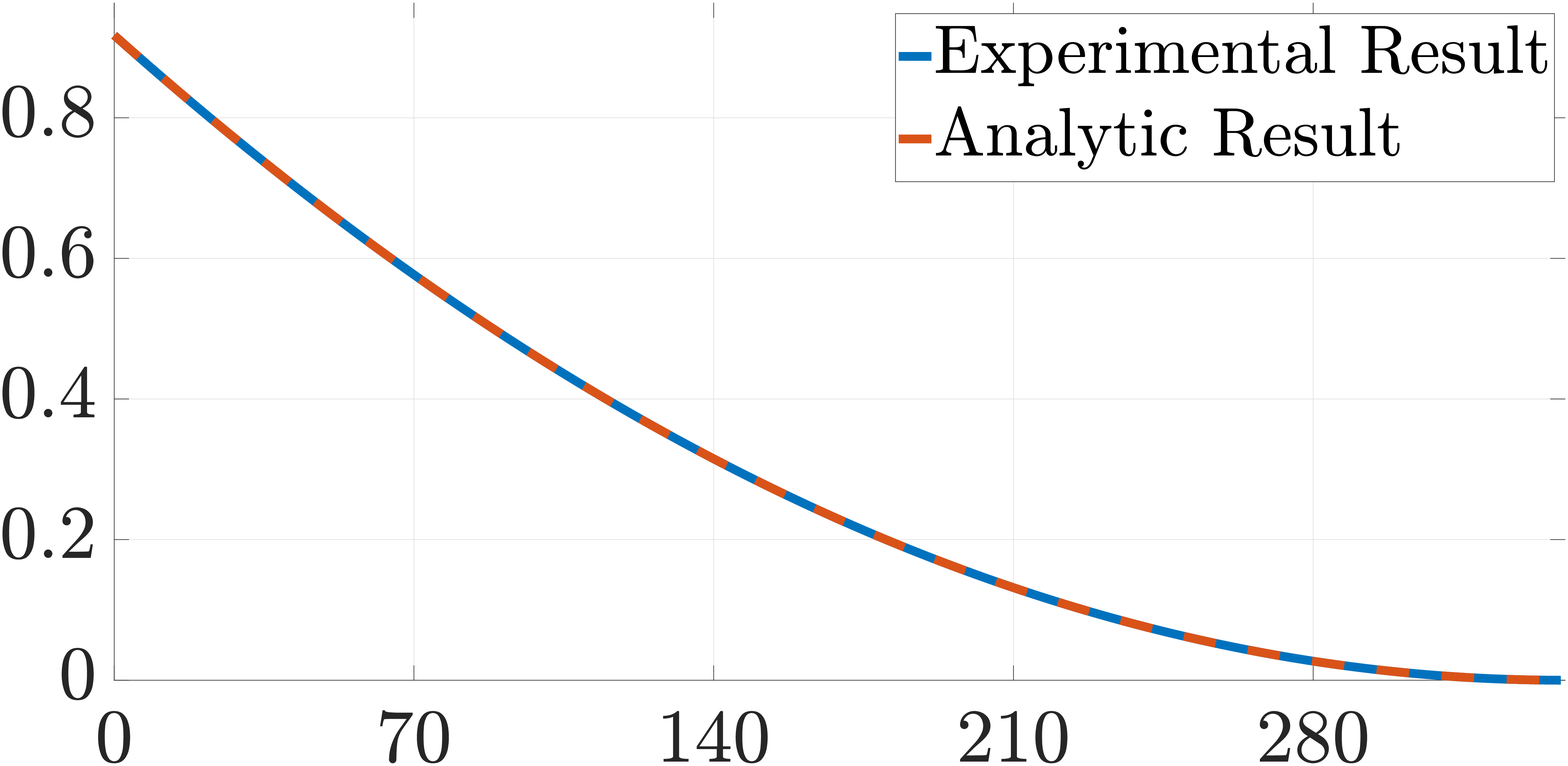}
\label{subfig:decay1D}
}\\
\subfloat[Eigenfunction (2D), $\lambda=-0.0269$]
  {\includegraphics[width=0.5\textwidth]{./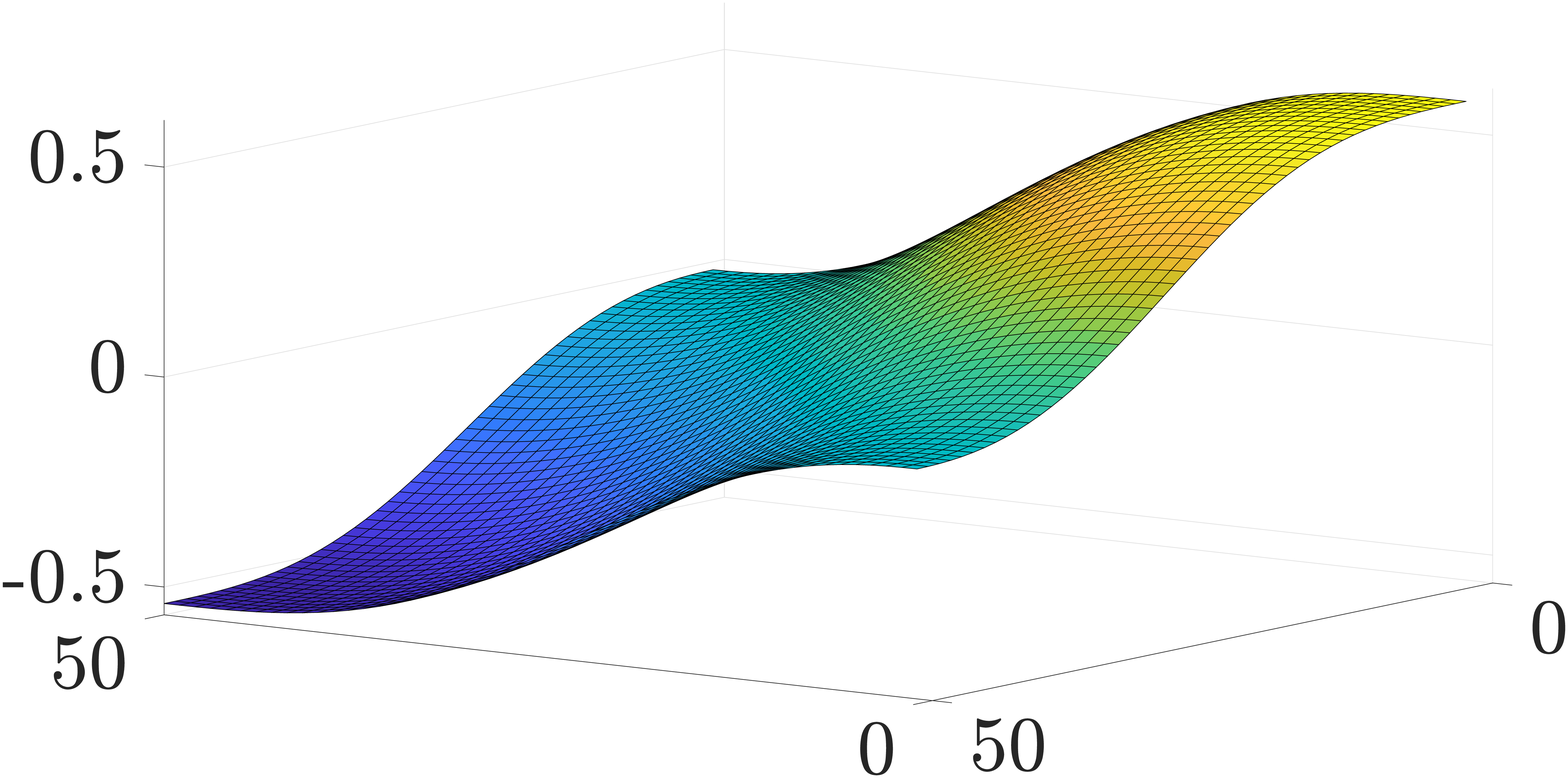}
\label{subfig:eigenFunc2D}
}
\subfloat[{The 2D eigenfunction decay (theoretical and numerical results) vs. time}]
 {
\includegraphics[width=0.5\textwidth]{./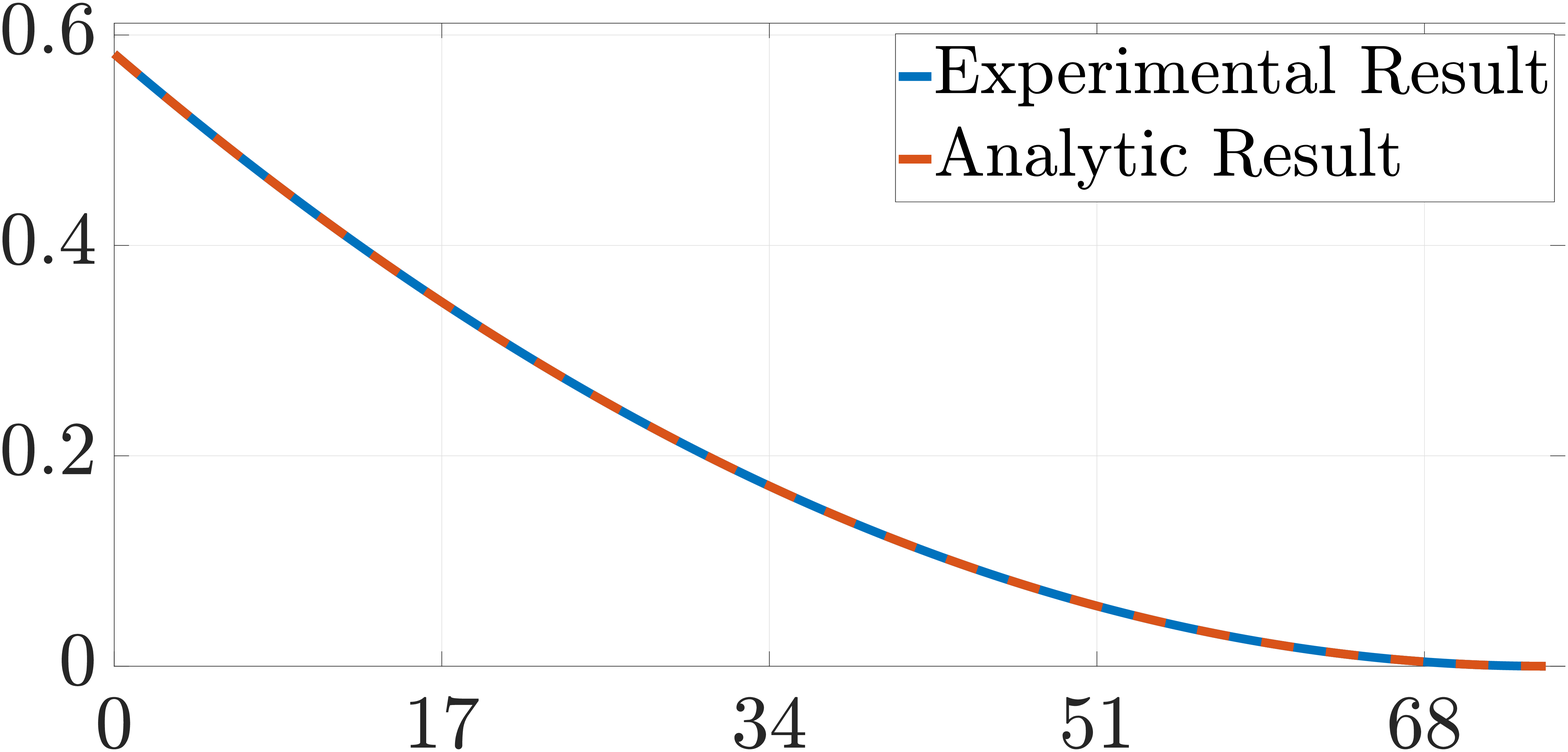}
\label{subfig:decay2D}
}
\caption{{\bf Decay profile of an eigenfunction.} On the left, eigenfunctions of the $p$-Laplacian are shown for $p=1.5$ in one and two dimensions. On the right, the experimental and theoretical values of $u(t,x_0)$ vs. time are shown.}
\label{Fig:1D2D}
\end{figure}
\begin{figure}[htb]
  \centering
  \captionsetup[subfigure]{justification=centering}
  \subfloat[$p=1.3$, $\lambda = -0.0531$]  {\includegraphics[width=0.225\textwidth]{./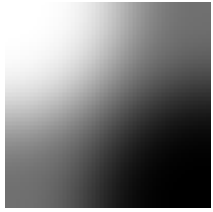}
\label{subfig:EFp1_3}}
$\qquad\qquad\qquad$
\subfloat[$p=1.5$, $\lambda = -0.0269$]
  {
\includegraphics[width=0.225\textwidth]{./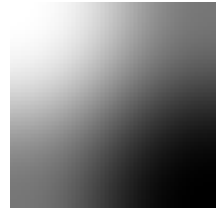}
\label{subfig:EFp1_5}
}
\\
\subfloat[{The spectrum $\abs{S}$ vs. time}]
  {
\includegraphics[width=0.495\textwidth]{./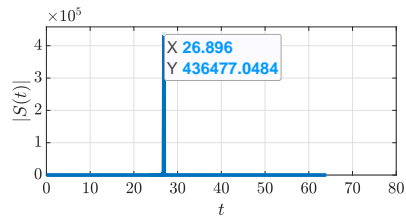}
\label{subfig:specEF1_3}
}
\subfloat[{The spectrum $\abs{S}$ vs. time}]
  {
\includegraphics[width=0.495\textwidth]{./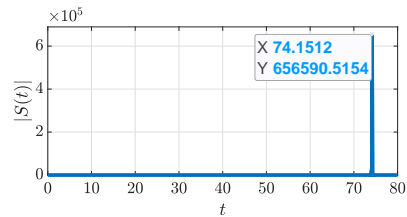}
\label{subfig:specEF1_5}
}
\caption{{\bf $p$-Spectrum of a two dimensional eigenfunction.}
In the first row we show two eigenfunctions of the $p$-Laplace operator for different values of $p$. In the second row we show the absolute value of their $p$-spectra (Eq. \eqref{eq:pSpectrum}). The extinction time of the $p$-flow is $T=26.9$ when $p=1.3$ and $T=74.3$ when $p=1.5$. The delta functions are obtained at the expected time according to Eq. \eqref{eq:gExtinctionTime}.}
\label{fig:EFspectra}
\end{figure}
\begin{figure}[phtb]
\centering
\captionsetup[subfigure]{justification=centering}
\subfloat[Noise]
{
\includegraphics[width=0.2\textwidth]{./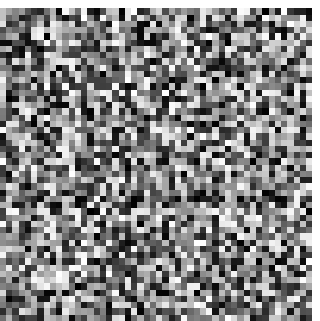}
\label{subfig:noise}
}
\subfloat[$p=1.5$, $\lambda=-0.0269$]
{
\includegraphics[width=0.2\textwidth]{./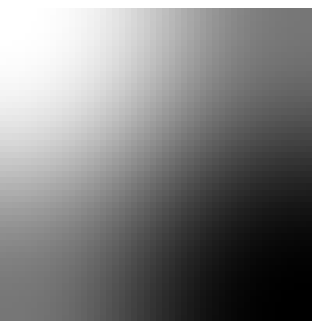}
\label{subfig:EFp1_5sec}
}
\subfloat[ =(a)+(b) The initial condition $f$]
{
\includegraphics[width=0.2\textwidth]{./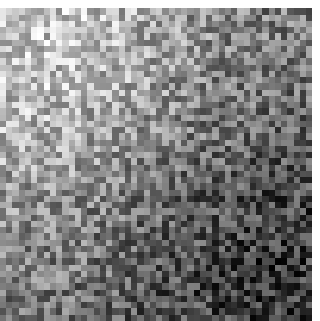}
\label{subfig:EFp1_5pN}
}
\\
\subfloat[{The spectrum $\abs{S}$ vs. time}]
  {
\includegraphics[width=0.5\textwidth,valign=t]{./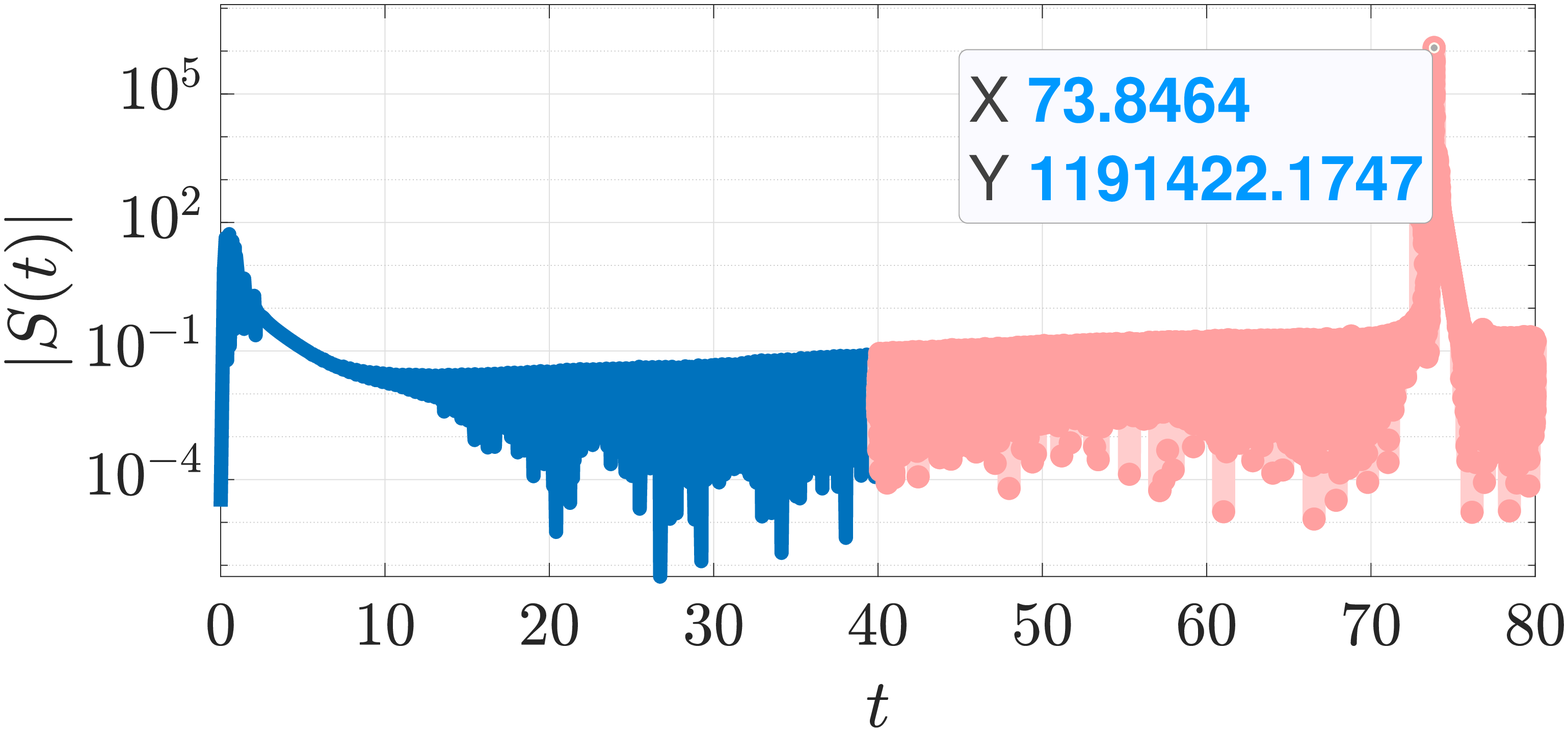}
\label{subfig:specEF1_5pNLog}
}
\subfloat[The filtered out noise]
  {
\includegraphics[width=0.2\textwidth,valign=t]{./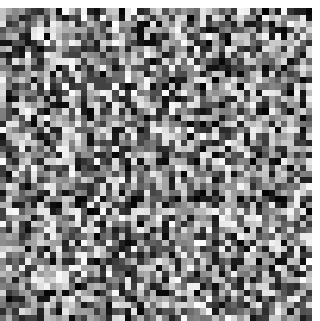}
\label{subfig:filteredNoise}
}
\subfloat[Recovered e.f.]
{
\includegraphics[width=0.2\textwidth,valign=t]{./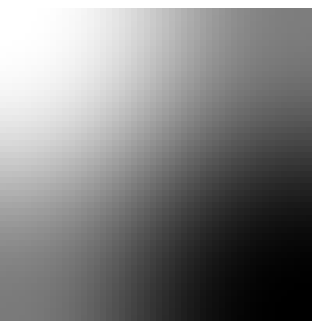}
\label{subfig:filteredEFp1_5}
}
\caption{{\bf Filtering.} Fig. \ref{subfig:noise} is noise image uniformly distributed noise between $[0,1]$. Fig. \ref{subfig:EFp1_5sec} a $p$-Laplacian eigenfunction. Fig. \ref{subfig:EFp1_5pN} is the initial condition in Eq. \eqref{eq:pFlow}. Fig. \ref{subfig:specEF1_5pNLog} is the spectrum of the initial condition where the filtered parts, blue and red, are Figs. \ref{subfig:filteredNoise} and  \ref{subfig:filteredEFp1_5} respectively.}
\label{Fig:filteringEF}
\end{figure}
\begin{figure}[phtb]
\centering
\begin{minipage}{0.20825\textwidth}
\includegraphics[width=\textwidth]{./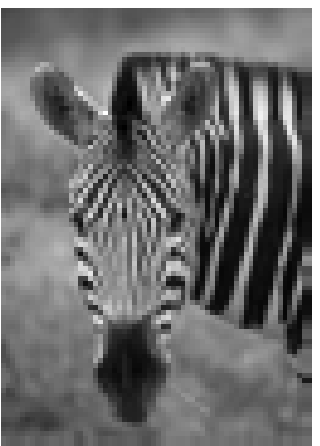}
\caption{A zebra}
\label{Fig:Zebra}
\end{minipage}
\begin{minipage}{0.29175\textwidth}
\includegraphics[width=\textwidth]{./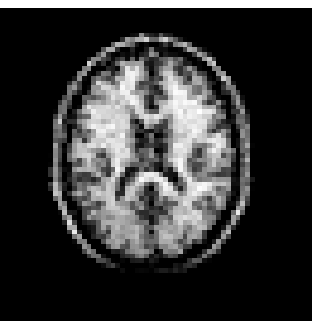}
\caption{A \acrshort{PET} image.}
\label{Fig:PET}
\end{minipage}
\end{figure}
\begin{figure}[phtb]
\centering
\subfloat[The $p$-spectrum $S$ vs. time]
  {
\includegraphics[width=0.6425\textwidth]{./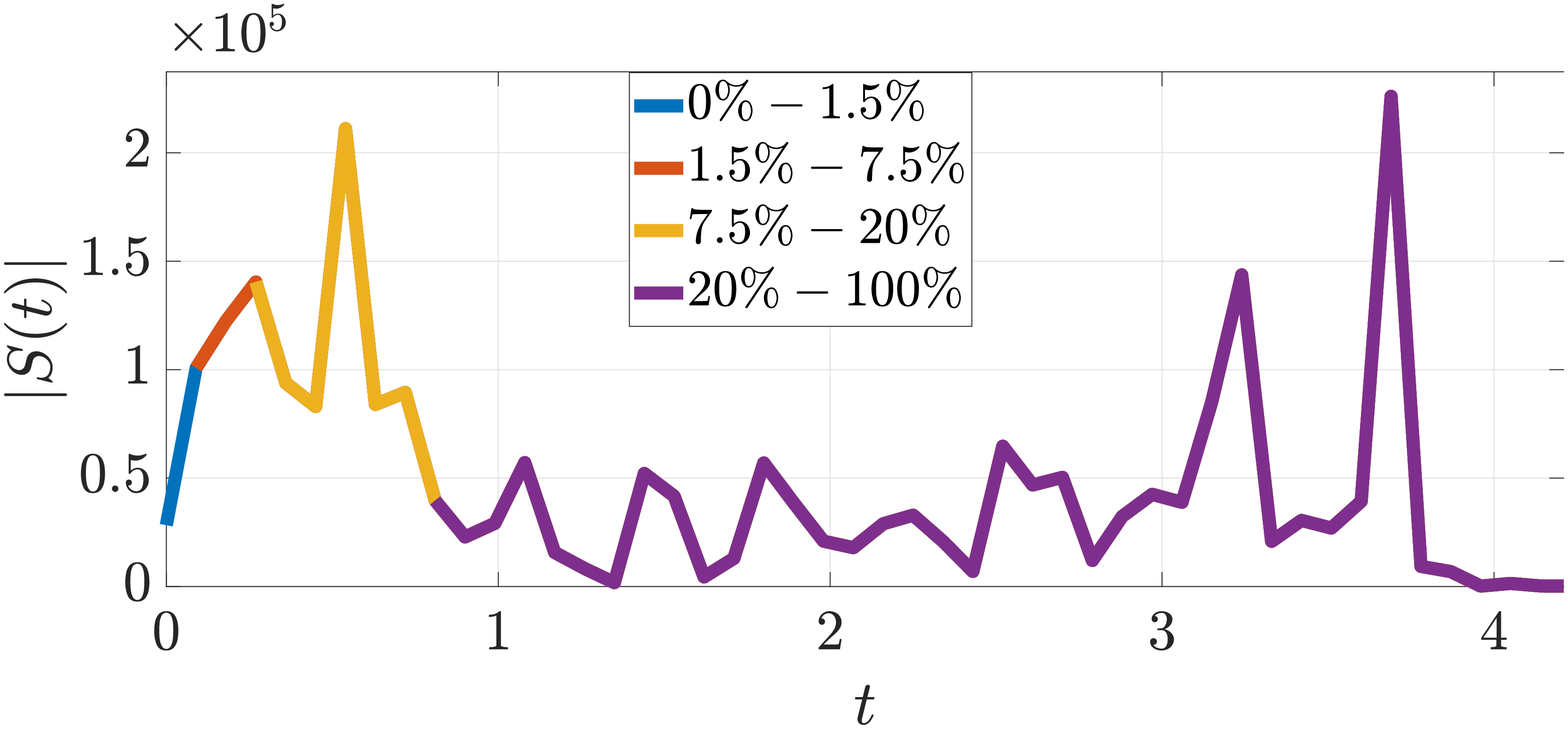}
\label{subfig:specZebra1_01color}
}\\
\subfloat[$0\%-1.5\%$]
  {
\includegraphics[width=0.215\textwidth]{./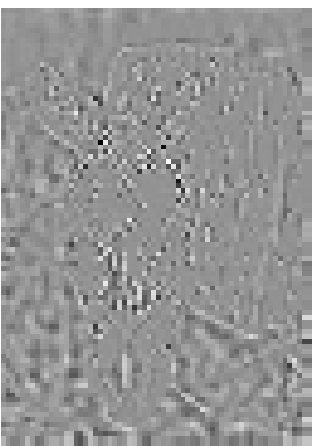}
\label{subfig:z1_101}
}
\subfloat[$1.5\%-7.5\%$]
  {
\includegraphics[width=0.215\textwidth]{./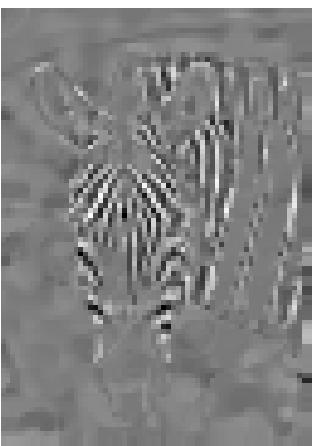}
\label{subfig:z2_101}
}
\subfloat[$7.5\%-20\%$]
  {
\includegraphics[width=0.215\textwidth]{./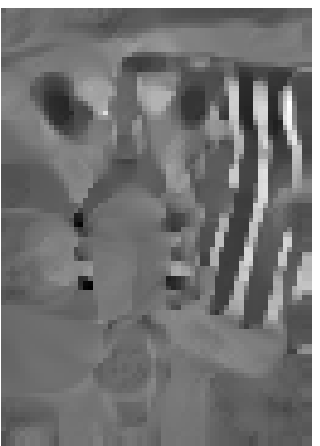}
\label{subfig:z3_101}
}
\subfloat[$20\%-100\%$]
  {
\includegraphics[width=0.215\textwidth]{./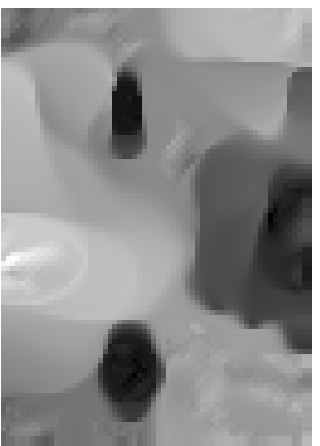}
\label{subfig:z4_101}
}
\caption{Zebra image decomposition with $p=1.01$}
\label{Fig:ZebraDecomposition_101}
\end{figure}
\begin{figure}[phtb]
\centering
\subfloat[The $p$-spectrum $S$ vs. time]
  {
\includegraphics[width=0.6425\textwidth]{./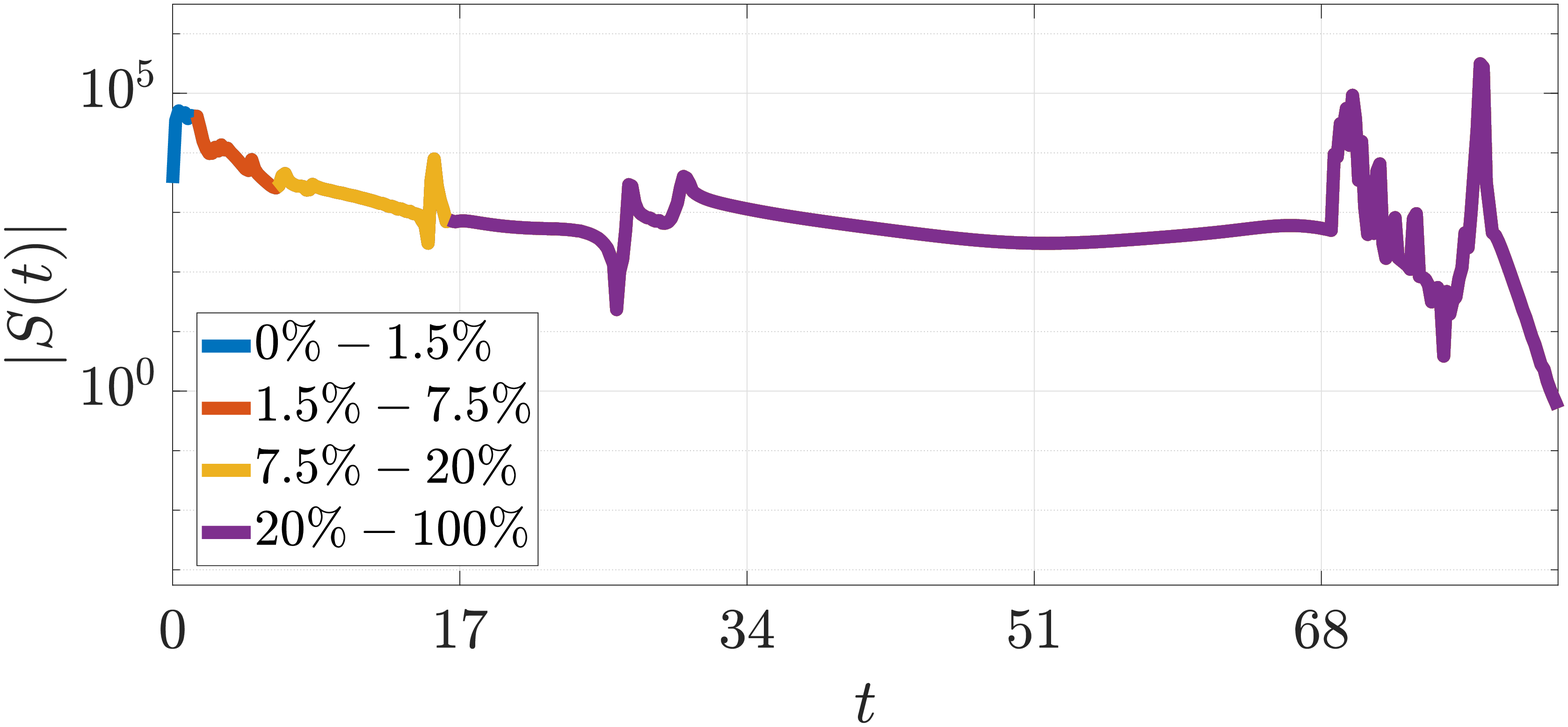}
\label{subfig:specZebra1_5LogColor}
}\\
\subfloat[$0\%-1.5\%$]
  {
\includegraphics[width=0.215\textwidth]{./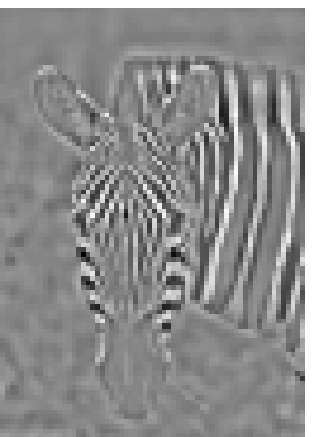}
\label{subfig:z1_15}
}
\subfloat[$1.5\%-7.5\%$]
  {
\includegraphics[width=0.215\textwidth]{./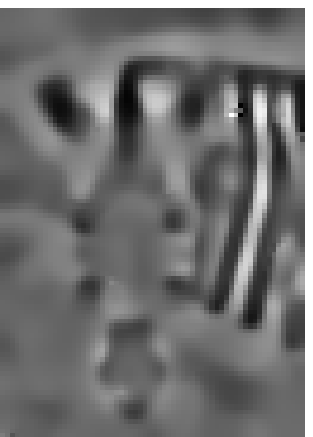}
\label{subfig:z2_15}
}
\subfloat[$7.5\%-20\%$]
  {
\includegraphics[width=0.215\textwidth]{./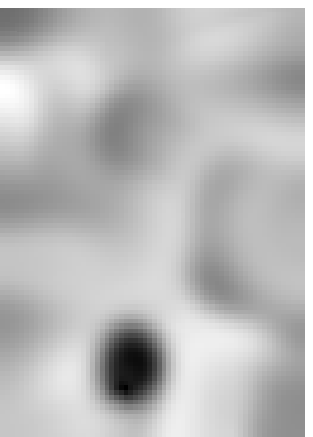}
\label{subfig:z3_15}
}
\subfloat[$20\%-100\%$]
  {
\includegraphics[width=0.215\textwidth]{./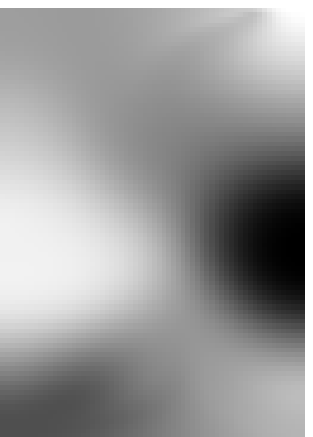}
\label{subfig:z4_15}
}
\caption{Zebra Decomposition with $p=1.5$}
\label{Fig:ZebraDecomposition_15}
\end{figure}
\begin{figure}[phtb]
\centering
\subfloat[The $p$-spectrum $S$]
  {
\includegraphics[width=0.6425\textwidth]{./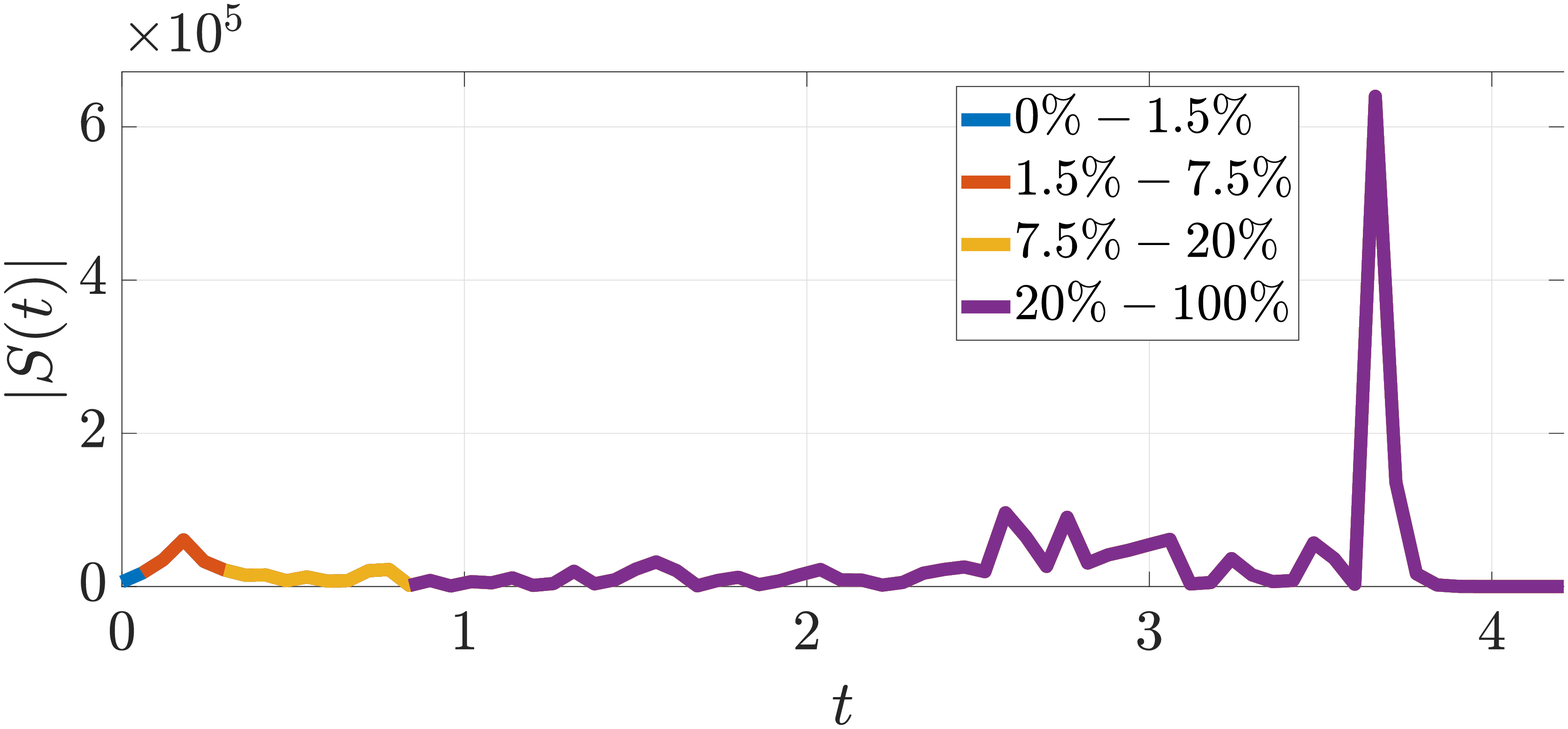}
\label{subfig:specPET1_01Color}
}\\
\subfloat[$0\%-1.5\%$]
  {
\includegraphics[width=0.225\textwidth]{./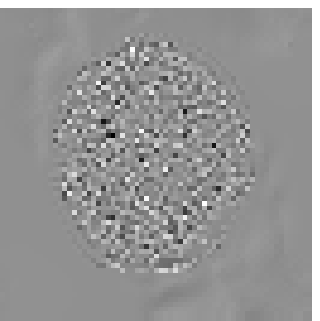}
\label{subfig:f1_101_PET}
}
\subfloat[$1.5\%-7.5\%$]
  {
\includegraphics[width=0.225\textwidth]{./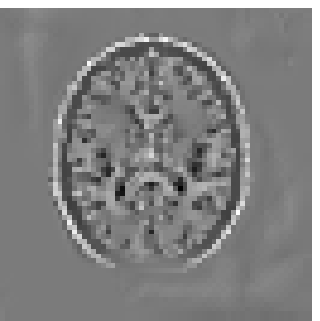}
\label{subfig:f2_101_PET}
}
\subfloat[$7.5\%-20\%$]
  {
\includegraphics[width=0.225\textwidth]{./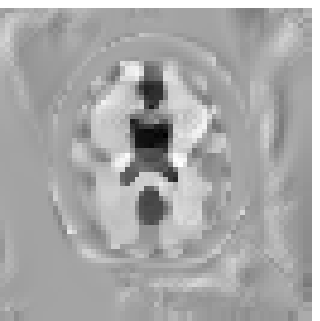}
\label{subfig:f3_101_PET}
}
\subfloat[$20\%-100\%$]
  {
\includegraphics[width=0.225\textwidth]{./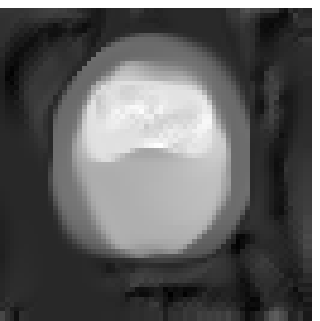}
\label{subfig:f4_101_PET}
}
\caption{A PET image decomposition with $p=1.01$}
\label{Fig:PETDecomposition_101}
\end{figure}
\begin{figure}[phtb]
\centering
\captionsetup[subfigure]{justification=centering}
\subfloat[The $p$-spectrum $S$ vs. time]
  {
\includegraphics[width=0.6425\textwidth]{./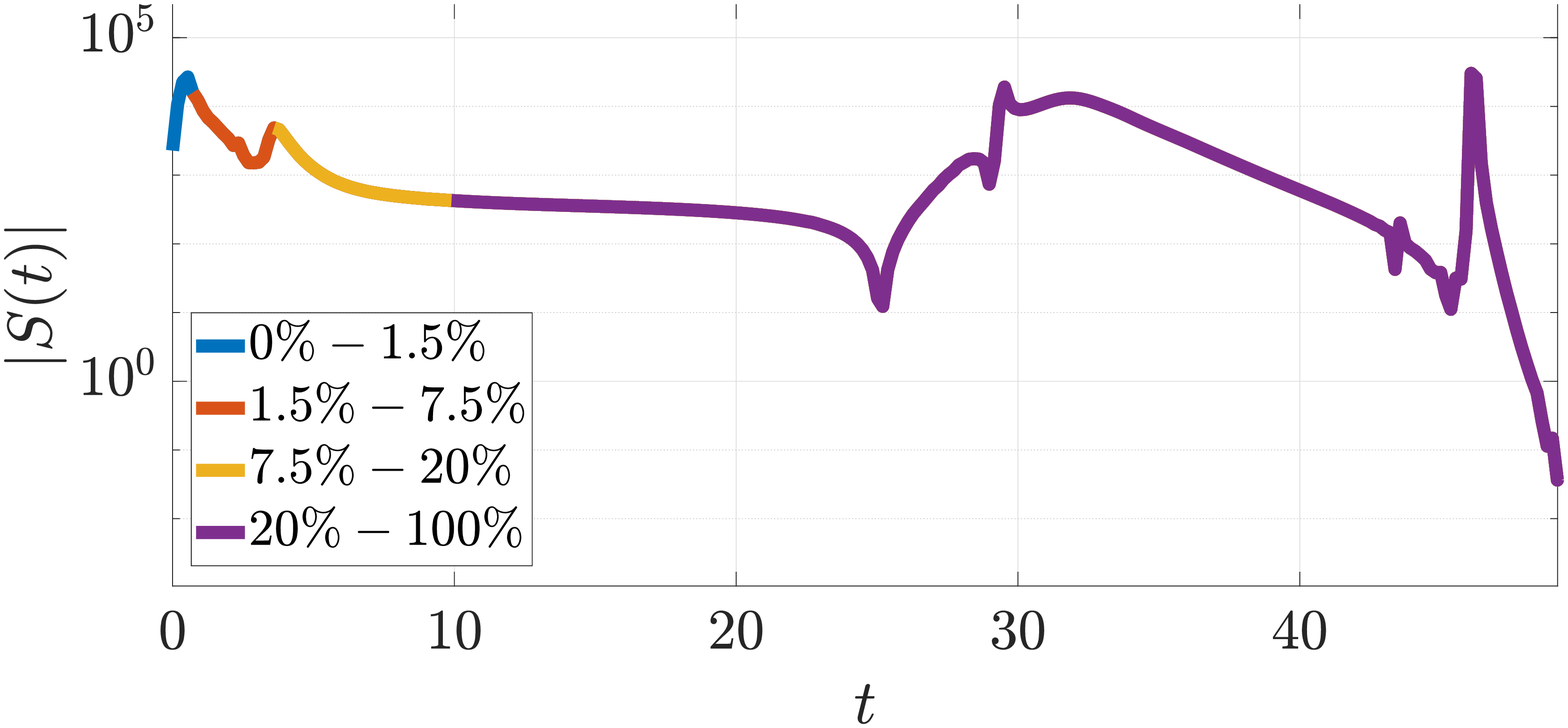}
\label{subfig:specPET1_5LogColor}
}\\
\subfloat[$0\%-1.5\%$]
  {
\includegraphics[width=0.225\textwidth]{./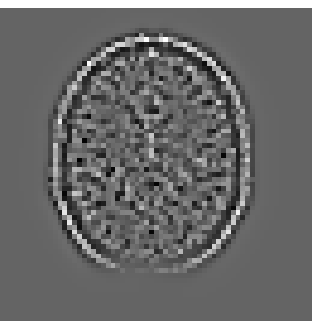}
\label{subfig:f1_15_PET}
}
\subfloat[$1.5\%-7.5\%$]
  {
\includegraphics[width=0.225\textwidth]{./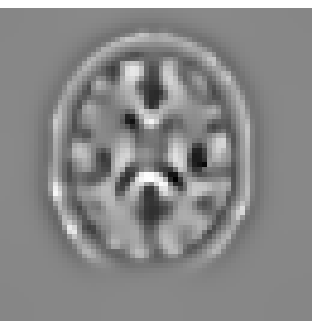}
\label{subfig:f2_15_PET}
}
\subfloat[$7.5\%-20\%$]
  {
\includegraphics[width=0.225\textwidth]{./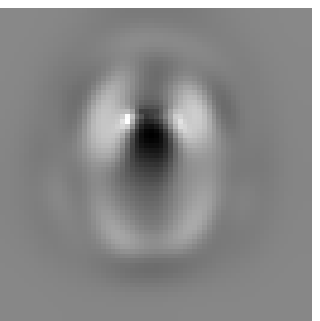}
\label{subfig:f3_15_PET}
}
\subfloat[$20\%-100\%$]
  {
\includegraphics[width=0.225\textwidth]{./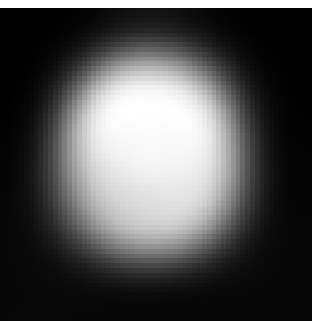}
\label{subfig:f4_15_PET}
}
\caption{A PET image decomposition $p=1.5$}
\label{Fig:PETDecomposition_15}
\end{figure}
\begin{figure}[phtb]
\centering
\captionsetup[subfigure]{justification=centering}
\subfloat[{The zero homogeneous spectrum $S$ "Normalized" vs. time}]
  {
\includegraphics[width=0.6425\textwidth]{./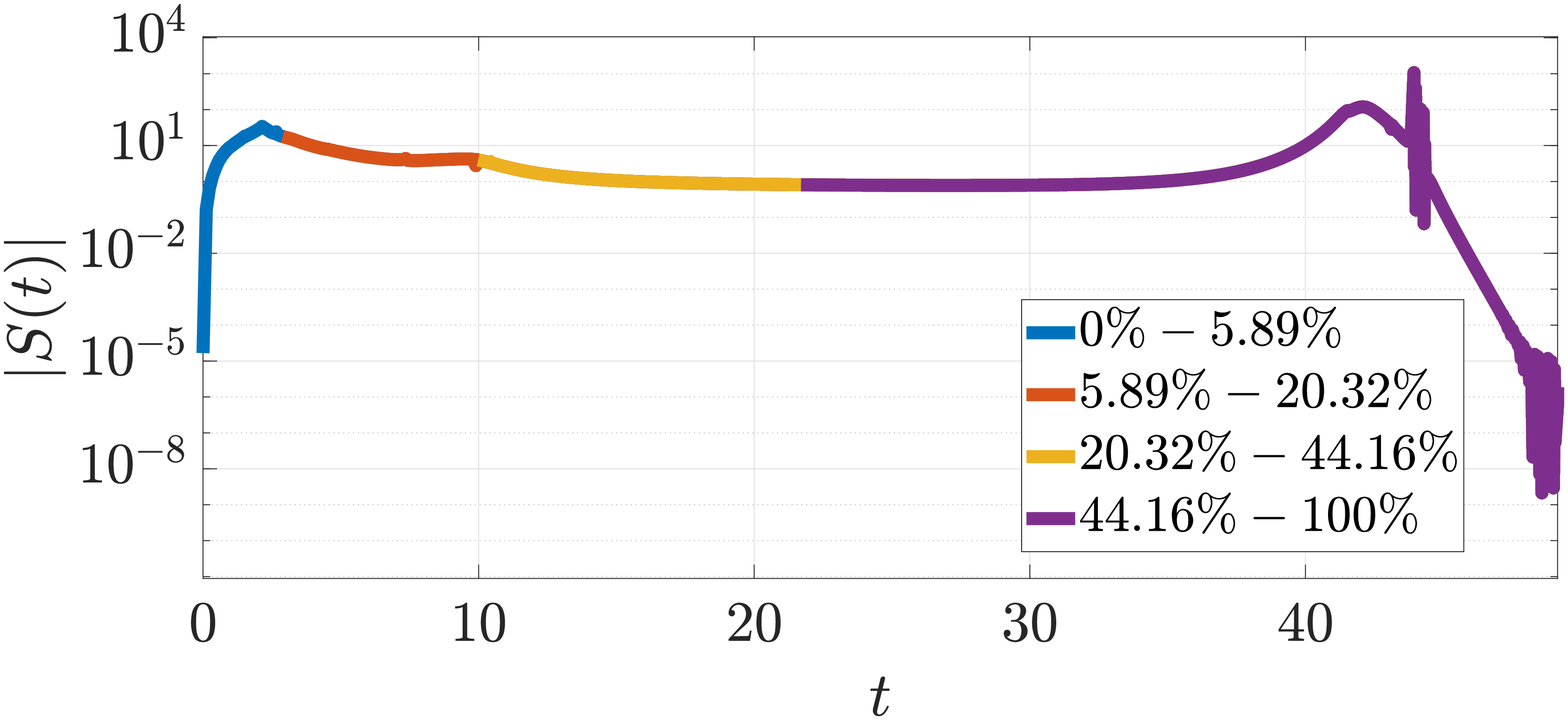}
\label{subfig:specPET1_5LogColor_Normalized}
}\\
\subfloat[$0\%-5.89\%$]
  {
\includegraphics[width=0.225\textwidth]{./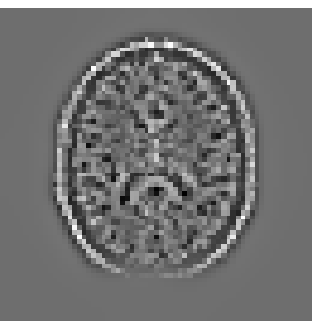}
\label{subfig:f1_15_PET_Normalized}
}
\subfloat[$5.89\%-20.32\%$]
  {
\includegraphics[width=0.225\textwidth]{./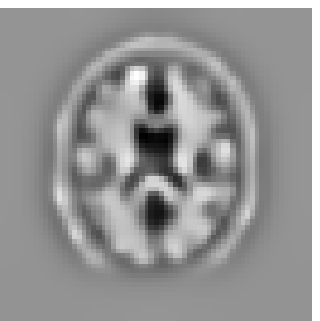}
\label{subfig:f2_15_PET_Normalized}
}
\subfloat[$20.32\%-44.16\%$]
  {
\includegraphics[width=0.225\textwidth]{./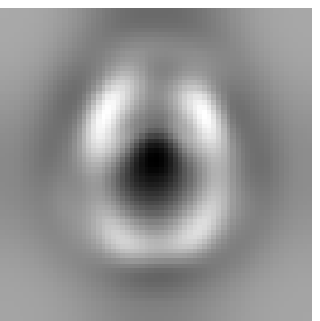}
\label{subfig:f3_15_PET_Normalized}
}
\subfloat[$44.16\%-100\%$]
  {
\includegraphics[width=0.225\textwidth]{./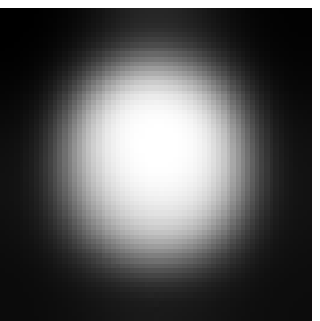}
\label{subfig:f4_15_PET_Normalized}
}
\caption{A PET image decomposition $p=1.5$ with normalized Laplacian operator}
\label{Fig:PETDecomposition_15_Normalized}
\end{figure}
\begin{figure}[phtb]
\centering
\subfloat[The \acrshort{TV}-spectrum vs. time]
  {
\includegraphics[width=0.6425\textwidth]{./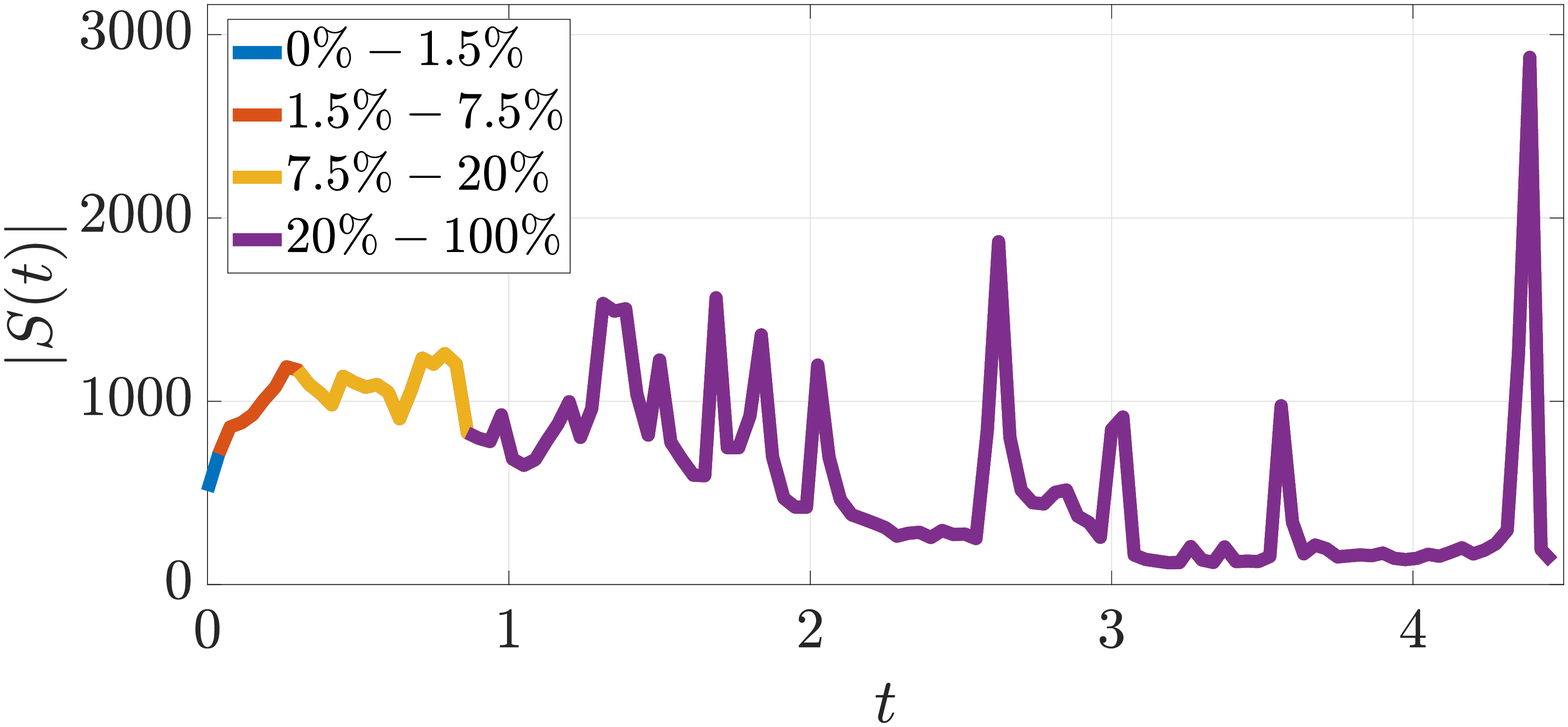}
\label{subfig:TV-zebra}
}\\
\subfloat[$0\%-1.5\%$]
  {
\includegraphics[width=0.215\textwidth]{./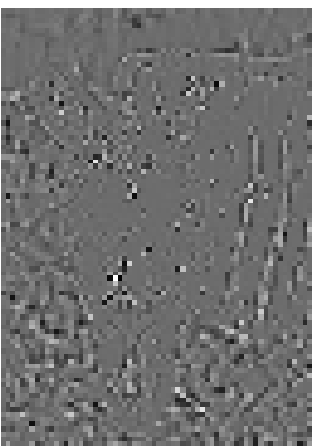}
\label{subfig:z1_TV}
}
\subfloat[$1.5\%-7.5\%$]
  {
\includegraphics[width=0.215\textwidth]{./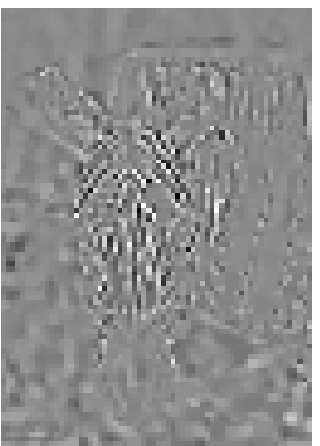}
\label{subfig:z2_TV}
}
\subfloat[$7.5\%-20\%$]
  {
\includegraphics[width=0.215\textwidth]{./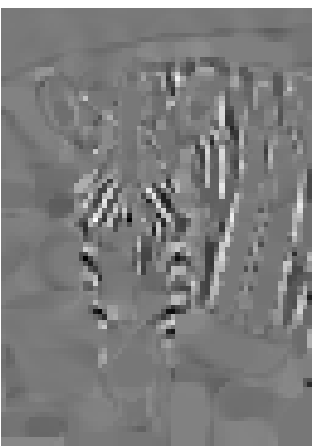}
\label{subfig:z3_TV}
}
\subfloat[$20\%-100\%$]
  {
\includegraphics[width=0.215\textwidth]{./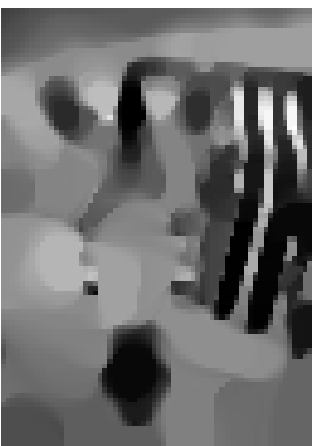}
\label{subfig:z4_TV}
}
\caption{Zebra image \acrshort{TV}-decomposition}
\label{Fig:ZebraDecomposition_TV}
\end{figure}
\section{Experiments}\label{sec:experiments}
In this section we numerically illustrate the theory presented above. We choose the operator $P$ to be the $p$-Laplacian operator where $p\in(1,2)$. We assume finite dimensions and Neumann boundary conditions. This setting admits Assumption \ref{ass:flowAssumption} and therefore all Theorems and Propositions shown earlier are valid.


We use the implementation of the gradient and divergence operator as defined by Chambolle in \cite{chambolle2004algorithm}. We implement our transform in Matlab and some of our experiments can be found \href{https://github.com/IdoCohen5743/pLaplaceFramework.git}{\underline{here}} (https://github.com/IdoCohen5743/pLaplaceFramework.git).

The aims of the following experiments are:
\begin{enumerate}
    \item To validate Theorem \ref{theo:homoOperator} by examining the decay profile of the $p-$flow initiated with an eigenfunction (Fig. \ref{Fig:1D2D}).
    \item  To illustrate Theorem \ref{theo:EFspectrum} by applying the $p$-transform on a single eigenfunction (Fig. \ref{fig:EFspectra}).
    \item To numerically demonstrate the framework of the $p$-transform by filtering and decomposing images with different values of $p$ (Figs. \ref{Fig:filteringEF}, \ref{Fig:ZebraDecomposition_101}, \ref{Fig:PETDecomposition_101}, \ref{Fig:ZebraDecomposition_15}, \ref{Fig:PETDecomposition_15}). 
\end{enumerate} 

The $p$-flow was implemented by an explicit scheme \cite{cohen2019stable} with a fixed time step ($dt=10^{-4}$). We used the fractional derivative implementation of Euler-Gr\"unwald-Letnikov type, as shown in \cite{machado2001discrete}. The eigenfunctions were generated numerically by the algorithm of \cite{cohen2018energy}. 

{\bf Decay profile (Theorem \ref{theo:homoOperator}).} The analytic solution of the decay profile, Eq. \eqref{eq:pFlowAnalytic}, was numerically validated. The analytic solution was computed for a certain spatial coordinate, $x_0$. We show the solutions for $x_0=0$ (in 1D), $x_0 = (0,0)$ (in 2D). A similar behaviour was exhibited for all other points (Fig. \ref{Fig:1D2D}). It can be observed that the analytic solution and the experimental one well agree. In addition, the extinction time was accurately predicted (Eq. \eqref{eq:gExtinctionTime}).

{\bf Spectral behaviour of an eigenfunction (Theorem \ref{theo:EFspectrum}).} Here we illustrate the $p$-spectrum of eigenfunctions with different values of $p$ (Fig. \ref{fig:EFspectra}). On the top two eigenfunctions are shown ($p=1.3$, left, and $p=1.5$, right) for which the $p$-transform is applied to. On the bottom row the magnitude of the spectrum  (Def. \ref{def:pSpectrum}) is presented. One can observe the spectrum has a single dominant scale $t$. Thus, the transform approaches a numerical delta, as predicted by Theorem \ref{theo:EFspectrum}. The theoretical extinction time, Eq. \eqref{eq:gExtinctionTime}, of the eigenfunctions is $26.913$ and $74.298$ for $p=1.3$ and $p=1.5$, respectively. The experimental results show high spectral density around $26.896$ and $74.151$, which agrees well with the theory.

{\bf Filtering.} Filtering of noise is demonstrated in Fig. \ref{Fig:filteringEF}. Ideal LPF (Eq. \eqref{eq:idealLPF}), $t_1=40$, is applied to an image containing an eigenfunction with additive white uniform noise. As can be viewed in the spectral plot, bottom left, the noise corresponds to low values of $t$ (high eigenvalues - blue part) and the spectral part corresponding to the eigenfunction is concentrated at higher scales $t$ (red part). Note, that the peak of the spectrum at high scale (corresponding to the eigenfunction) appears approximately at the same scale as the clean eigenfunction (Fig \ref{subfig:EFp1_5}).

{\bf $p$-Decomposition.} One of the benefits of our framework is the ability to decompose a signal into components with different degree of smoothness, depending on the value of $p$. In this context, a decomposition is a partition of the spectrum into non-overlapping intervals such that the sum of all parts covers the entire spectrum. This is, the sum of all parts is the original image. Here, we decompose two images (Figs. \ref{Fig:Zebra} and \ref{Fig:PET}) by the $p$-transform with two values of $p$ ($p=1.01,\,1.5$). We decompose the spectrum into four parts. As most of the details are concentrated at lower scales $t$, the filters' width is growing with scale. We arbitrarily chose the filters cut offs at $\,1.5\%,\,7.5\%,\,20\%$, where $100\%$ is the width of the entire spectrum. The $p$-decompositions of a zebra image (Fig. \ref{Fig:Zebra}) with $p=1.01$ and $p=1.5$ are shown in Figs. \ref{Fig:ZebraDecomposition_101} and  \ref{Fig:ZebraDecomposition_15}, respectively. The $p$-decompositions of a \acrshort{PET} image (Fig. \ref{Fig:PET}) with $p=1.01$ and $p=1.5$ are shown in Figs. \ref{Fig:PETDecomposition_101} and \ref{Fig:PETDecomposition_15}, respectively.

It can be observed that low scale $t$ is related to fine details in the image when $p=1.01$ and also to edges when $p=1.5$; high scale $t$ is related to the coarser parts of the image. Whereas the $p$-decomposition with $p=1.5$ resembles linear diffusion, the decomposition with $p=1.01$ can be seen as an approximation of the \acrshort{TV}-transform.

{\bf Comparison to TV.} As it well studied, the \acrshort{TV}-flow is implemented with the dual problem (see \cite{chambolle2004algorithm}), which is computationally expensive. However, the $p$-flow can be evaluate explicitly \cite{cohen2019stable}. We can approximate the \acrshort{TV}-transform with the $p$-transform when $p$ is close to one. Theoretically, there is no constrain on the step size of the \acrshort{TV}-flow in a semi-implicit setting. However, to compare between the image spectrum of an image the step size should be similar. The \acrshort{TV}-spectrum, shown in Fig. \ref{subfig:TV-zebra}, is characterized with group of peaks  which are similar to the decomposition with $p=1.01$ Figs. \ref{subfig:specZebra1_01color} and \ref{subfig:specPET1_01Color}.

{\bf Normalized flow.} The last issue we would like to examine is a comparison between the $p$-transform and the one-homogeneous setting from \cite{burger2016spectral}. 
More specifically, we examine the following \emph{normalized flow}
$$u_t(t)=\Delta_p u(t)/\norm{\nabla u(t)}_p^{p-1}.$$
This is a flow based on a zero-homogeneous operator, stemming from an absolutely one-homogeneous functional. 
Theoretically, the $p$-Laplacian flow and the normalized flow are shape preserving flows for the same class of eigenfunctions. The difference is in the decay profiles, where the normalized flow decays linearly. We apply a decomposition with the normalized flow for $p=1.5$. \textcolor{black}{In order to compare between the normalized flow and the original one we should rescale the time axes. The normalized $p$-decomposition (Fig. \ref{Fig:PETDecomposition_15_Normalized}) and the decomposition with $p=1.5$ (Fig. \ref{Fig:PETDecomposition_15}) are very similar but not identical. It is a still an open problem whether it is only a numerical issue. Although this operator is one homogeneous, this decomposition is not similar to \acrshort{TV} decomposition. This kind of decomposition should be further investigated. We know an eigenfunction decays linearly as expected.} We would like to thank the anonymous reviewer for raising this issue.

%
%
%
\section{Conclusion}\label{sec:conc}
The Fourier transform is highly instrumental in processing smooth, band-limited signals. However, filtering signals with inherent discontinuities is known to produce artifacts. The \acrshort{TV}-transform is well adapted for such signals. This work aims to bridge the gap between these two type of transforms, allowing partially smooth signals to be well represented and processed.

Definitions for decomposition, reconstruction, filtering, and spectrum are proposed within a rigorous mathematical framework. In contrast to \cite{bungert2018solution} where the decay profile might be similar to those we study, our decompositions are associated with a much broader class of eigenfunctions, not based only on zero-homogeneous operators. This framework is valid for any functional of homogeneity in the range of $(1,2)$.
Thus, it generalizes the one-homogeneous spectral representation of \cite{burger2016spectral}.

\paragraph{Acknowledgements\textcolor{red}{.}} We acknowledge support by the Israel Science Foundation (grant No. 718/15). This work was supported by the Technion Ollendorff Minerva Center.

\appendix

\section{A proof of Theorem \ref{theo:homoOperator}}\label{sec:appProofTheohomoOperator}
\begin{proof}
$\Leftarrow$ (We assume the solution is shape preserving and prove $f$ is an eigenfunction)\\
\textcolor{black}{
If the solution is shape preserving, since $a(t=0)=1$, we can write
\begin{equation*}
    u_t(t=0)=P(a(t=0)\cdot f)
    =P(f).
\end{equation*}
On the other hand
\begin{equation*}
    u_t(t=0)=a'(t=0)f.
\end{equation*}
By comparing these expressions, we get that $f$ is an eigenfunction with the eigenvalue $\lambda = a'(t=0)$.}\\
$\Rightarrow$ (We assume $f$ is an eigenfunction and we prove the solution is shape preserving)\\
\textcolor{black}{
Let us examine the flow $u(t)=a(t)f$,
where $a(t)$ is defined by the following ODE,
\begin{equation*}
a'(t)= a(t)\abs{a(t)}^{\gamma-1}\lambda,\quad a(0)=1.
\end{equation*}
When $f$ is an eigenfunction with eigenvalue $\lambda$, one can observe this flow solves \eqref{eq:ss}. Existence and uniqueness yield this is the only solution. Eq. \eqref{eq:u_analytic} is the solution for the above ODE.
}
\end{proof}

\section{A proof of Theorem \ref{theo:EFspectrum}}\label{sec:appProofTheoEFspectrum}
\begin{proof}
Let us discuss the monomial $y(x)=x^{\beta}$ where $x\in[0,1]$ and $0$ else. Based on Euler \cite{dalir2010applications,euler1738progressionibus}, we can assert (see proof in \ref{sec:appMonoEuler})
\begin{equation}
    \mathcal{I}_{0^+}^{\ceil{\beta}-\beta}\left\{x^{\beta}\right\}=\frac{\Gamma(\beta+1)}{\Gamma(\ceil{\beta}+1)}x^{\ceil{\beta}}.
\end{equation}
On the other hand, we can say
\begin{equation}
    \begin{split}
        \frac{\Gamma(\beta+1)}{\Gamma(\ceil{\beta}+1)}x^{\ceil{\beta}}=&\frac{1}{\Gamma(\ceil{\beta}-\beta)}\int_0^x \tau^{\beta}(x-\tau)^{\ceil{\beta}-\beta -1}d\tau\\
        \underbrace{=}_{\tau=1-(\gamma -1)\lambda\cdot s}&\frac{1}{\Gamma(\ceil{\beta}-\beta)}\int_{\frac{1}{(\gamma -1)\lambda}}^{\frac{1-x}{(\gamma -1)\lambda}} \left[1-(\gamma -1)\lambda\cdot s\right]^{\beta}\\
        &\quad\left\{x-\left[1-(\gamma -1)\lambda\cdot s\right]\right\}^{\ceil{\beta}-\beta -1}\cdot \left[-(\gamma -1)\lambda\right]\cdot ds\\ 
        =&\frac{(\gamma -1)\lambda}{\Gamma(\ceil{\beta}-\beta)}\int_{\frac{1-x}{(\gamma -1)\lambda}}^{\frac{1}{(\gamma -1)\lambda}} (1-(\gamma -1)\lambda\cdot s)^{\beta}\\
        &\quad\left[(\gamma -1)\lambda\left(\frac{x-1}{(\gamma -1)\lambda}+s\right)\right]^{\ceil{\beta}-\beta -1}ds\\
        =&\frac{\left[(\gamma -1)\lambda\right]^{\ceil{\beta}-\beta}}{\Gamma(\ceil{\beta}-\beta)}\int_{\frac{1-x}{(\gamma -1)\lambda}}^{\frac{1}{(\gamma -1)\lambda}} (1-(\gamma -1)\lambda\cdot s)^{\beta}\left[\frac{x-1}{(\gamma -1)\lambda}+s\right]^{\ceil{\beta}-\beta -1}ds.
            \end{split}
\end{equation}
Let us denote $t=\frac{1-x}{(\gamma -1)\lambda}$ and we have
\begin{equation}
    \begin{split}
        \frac{\Gamma(\beta+1)}{\Gamma(\ceil{\beta}+1)}&\left\{\left[1+(1-\gamma)\lambda t\right]^+\right\}^{\ceil{\beta}}=\\
        =&\frac{\left[(\gamma -1)\lambda\right]^{\ceil{\beta}-\beta}}{\Gamma(\ceil{\beta}-\beta)}
        \int_{t}^{\frac{1}{(\gamma -1)\lambda}} \left[1-(\gamma -1)\lambda\cdot s\right]^{\beta}\left[s-t\right]^{\ceil{\beta}-\beta -1}ds\\
        =&\left[(\gamma -1)\lambda\right]^{\ceil{\beta}-\beta} I_{{\frac{1}{(\gamma -1)\lambda}}^-}^{\ceil{\beta}-\beta}\left\{\left[\left(1+(1-\gamma)\lambda\cdot t\right)^+\right]^{\beta}\right\}.
    \end{split}
\end{equation}
Then
\begin{equation}\label{eq:intermidiatEq}
    \begin{split}
        \frac{\Gamma(\beta+1)}{\Gamma(\ceil{\beta}+1)}\left[\left(1+(1-\gamma)\lambda t\right)^+\right]^{\ceil{\beta}}=&\left[(\gamma -1)\lambda\right]^{\ceil{\beta}-\beta} I_{{\frac{1}{(\gamma -1)\lambda}}^-}^{\ceil{\beta}-\beta}\left\{\left[\left(1+(1-\gamma)\lambda\cdot t\right)^+\right]^{\beta}\right\}
    \end{split}
\end{equation}
Now, we apply the operator ${d^{\ceil{\beta}+1}}/{dt^{\ceil{\beta}+1}}$. For convenience we separate the discussion to the right and the left hand side of \eqref{eq:intermidiatEq}. Applying the derivative operator on the right hand side gives:
\begin{equation*}
    \begin{split}
        \frac{d^{\ceil{\beta}+1}}{dt^{\ceil{\beta}+1}}\left\{\frac{\Gamma(\beta+1)}{\Gamma(\ceil{\beta}+1)}\left[\left(1+(1-\gamma)\lambda t\right)^+\right]^{\ceil{\beta}}\right\}=\\
        =\Gamma(\beta+1)\left[(1-\gamma)\lambda\right]^{\ceil{\beta}+1}\delta(1+(1-\gamma)\lambda t).
    \end{split}
\end{equation*}
Applying the derivative operator on the right hand side and using \eqref{eq:LiouvilleDerivative-} gives:
\begin{equation*}
    \begin{split}
    \frac{d^{\ceil{\beta}+1}}{dt^{\ceil{\beta}+1}}\left\{{\left[(\gamma -1)\lambda\right]^{\ceil{\beta}-\beta}} I_{{\frac{1}{(\gamma -1)\lambda}}^-}^{\ceil{\beta}-\beta}\left\{\left[\left(1+(1-\gamma)\lambda t\right)^+\right]^{\beta}\right\}\right\}
    =\\
    =\left[(\gamma -1)\lambda\right]^{\ceil{\beta}-\beta}(-1)^{\ceil{\beta}+1} D_{{\frac{1}{(\gamma -1)\lambda}}^-}^{\beta +1}\left\{\left[\left(1+(1-\gamma)\lambda\cdot t\right)^+\right]^{\beta}\right\}.
    \end{split}
\end{equation*}
Comparing these expressions yields
\begin{equation*}
    \begin{split}
        D_{{\frac{1}{(\gamma -1)\lambda}}^-}^{\beta+1}\left\{\left[\left(1+(1-\gamma)\lambda\cdot t\right)^+\right]^{\beta}\right\}=&\Gamma(\beta+1)\left[(\gamma -1)\lambda\right]^{\beta+1}\delta(1+(1-\gamma)\lambda t).
    \end{split}
\end{equation*}
Substituting it in \eqref{eq:pTransform} we reach \eqref{eq:ptransformEF}.
\end{proof}
Note, we could choose any value greater than $\frac{1}{(\gamma -1)\lambda}$ (the extinction time of the flow) since $u(t) = 0,\,\forall t>T$.


\section{Fractional integration of a monomial function}\label{sec:appMonoEuler}
Let the function $f$ be
\begin{equation*}
f(x)=
    \begin{cases}
    x^\beta&x\in[0,1]\\
    0&else
    \end{cases}
\end{equation*}
where $\beta\in\mathbb{R}_{>0}$. The $\ceil{\beta}-\beta$ order fractional integral of $f$ is
\begin{equation*}
    I_{0^+}^{\ceil{\beta}-\beta}f(x)=
    \begin{cases}
    0&x<0\\
    \frac{1}{\Gamma\left(\ceil{\beta}-\beta\right)}\int_{0}^x(x-\tau)^{\ceil{\beta}-\beta -1}\tau^\beta d\tau&x\in[0,1]\\
    \frac{1}{\Gamma\left(\ceil{\beta}-\beta\right)}\int_{0}^1(x-\tau)^{\ceil{\beta}-\beta -1}\tau^\beta d\tau&x>1
    \end{cases}
\end{equation*}
Let us simplify the following expression
\begin{equation*}
    \begin{split}
        \int_{0}^x(x-\tau)^{\ceil{\beta}-\beta -1}\tau^\beta d\tau=&x^{\ceil{\beta}-\beta -1}\int_{0}^x\left(1-\frac{\tau}{x}\right)^{\ceil{\beta}-\beta -1}\tau^\beta d\tau\\
        =&x^{\ceil{\beta}-1}\int_{0}^x\left(1-\frac{\tau}{x}\right)^{\ceil{\beta}-\beta -1}\left(\frac{\tau}{x}\right)^\beta d\tau\\
        \underbrace{=}_{s=\frac{\tau}{x}}&x^{\ceil{\beta}}\underbrace{\int_{0}^1\left(1-s\right)^{\ceil{\beta}-\beta -1}s^\beta ds}_{ \textrm{beta function}}\\
        =&x^{\ceil{\beta}}\frac{\Gamma(\ceil{\beta}-\beta)\Gamma(\beta+1)}{\Gamma(\ceil{\beta}+1)}.
    \end{split}
\end{equation*}
And we get
\begin{equation*}
    I_{0^+}^{\ceil{\beta}-\beta}f(x)=
    \begin{cases}
    0&x<0\\
    \frac{\Gamma(\beta+1)}{\Gamma(\ceil{\beta}+1)}x^{\ceil{\beta}}&x\in[0,1]\\
    \frac{\Gamma(\beta+1)}{\Gamma(\ceil{\beta}+1)}&x>1
    \end{cases}
\end{equation*}

We recall here the Euler generalization of integer derivative of monomial \cite{dalir2010applications,euler1738progressionibus}
\begin{equation}
    D^{m}\left\{x^n\right\}=\frac{\Gamma(n+1)}{\Gamma(n-m+1)}x^{n-m}.
\end{equation}


\bibliography{smartPeople}

\end{document}